\theoremstyle{plain}
\newtheorem{theorem}{Theorem}
\newtheorem*{theorem*}{Theorem}
\newtheorem{proposition}[theorem]{Proposition}
\newtheorem{corollary}[theorem]{Corollary}
\theoremstyle{definition}
\newtheorem*{definition*}{Definition}
\theoremstyle{remark}
\numberwithin{equation}{section}
\def\lst{\operatorname{LST}}
\def\tri{\mathcal{T}}
\def\surface{\mathcal{S}}
\def\longitude{\lambda}
\def\meridian{\mu}
\def\normmin{S}
\def\manifold{M}
\def\Z{\mathbb{Z}}
\def\class{\alpha}
\begin{document}

\title{A new family of minimal ideal triangulations\\ of cusped hyperbolic 3--manifolds}
\author{J. Hyam Rubinstein, Jonathan Spreer and Stephan Tillmann}

\begin{abstract} 
Previous work of the authors with Bus Jaco determined a lower bound on the complexity of cusped hyperbolic $3$--manifolds and showed that it is attained by the monodromy ideal triangulations of once-punctured torus bundles. This paper exhibits an infinite family of minimal ideal triangulations of Dehn fillings on the link $8^3_9$ that also attain this lower bound on complexity.
\end{abstract}
	
\primaryclass{57M25, 57N10}
\keywords{3--manifold, minimal triangulation, layered triangulation, efficient triangulation, complexity}
\makeshorttitle

%%%%%%%%%%%%%%%%%

\section{Introduction}

We refer to \cite{Jaco-ideal-2020} for background and precise definitions used in this paper and only give a quick summary here. For a cusped orientable hyperbolic 3--manifold $\manifold$ of finite volume, the \textbf{norm} $|| \ \class \ ||$ of a non-trivial class $\class \in H_2 (\manifold,\Z_2)$ is the negative of the maximal Euler characteristic of a properly embedded surface $S$ (no component of which is a sphere) representing the class. Any surface $S$ with $[S]=\class$ and $-\chi(S) = || \ \class \ ||$ is \textbf{taut} if no component of $S$ is a sphere or a torus.
A class $\class \in H_2 (\manifold,\Z_2)$ determines a labelling of the ideal edges of an ideal triangulation of $\manifold$ by elements in $\Z_2 = \{0,1\}$. These can be interpreted as edge weights of a normal surface, called the \textbf{canonical normal representative} of $\class.$ 

The \textbf{complexity} $c(\manifold)$ is the minimal number of ideal tetrahedra in an ideal triangulation of $\manifold.$
In \cite[Theorem 1]{Jaco-ideal-2020}, the authors and Bus Jaco obtain the following lower bound on the complexity of a cusped hyperbolic $3$-manifold:

\begin{theorem}\hspace{-0.2cm}\emph{\cite{Jaco-ideal-2020}}
  \label{thm:sumofnorms}
  Let $\tri$ be an ideal triangulation of the cusped orientable hyperbolic $3$--manifold $\manifold$ of finite volume. If $H \le H_2 (\manifold,\Z_2)$ is a rank $2$ subgroup, then the number of ideal tetrahedra $|\tri|$ satisfies
  $$|\tri| \geq c(M) \geq \sum \limits_{0 \neq \class \in H} || \ \class\ ||. $$
   Moreover, in the case of equality the triangulation is minimal and each canonical normal representative of a non-zero element in $H\le H_2 (\manifold,\Z_2)$ is taut and meets each tetrahedron in a quadrilateral disc.
\end{theorem}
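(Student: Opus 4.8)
The plan is to reduce the theorem to a single Euler characteristic estimate. Since $c(\manifold)$ is the minimum of $|\tri|$ over all ideal triangulations while the right--hand side is independent of $\tri$, it is enough to prove $|\tri| \ge \sum_{0\neq\class\in H} || \ \class \ ||$ for an arbitrary $\tri$; minimising over $\tri$ then gives $c(\manifold)\ge\sum_{0\neq\class\in H} || \ \class \ ||$, while $|\tri|\ge c(\manifold)$ is the definition of complexity. A rank $2$ subgroup $H\le H_2(\manifold,\Z_2)$ has exactly three non-zero elements $\class_1,\class_2,\class_3$ with $\class_1+\class_2+\class_3=0$, and I would work with their canonical normal representatives $S_1,S_2,S_3$, whose $\Z_2$ edge--labels $w_1,w_2,w_3$ are additive, so $w_1+w_2+w_3\equiv 0$ on every ideal edge. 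After discarding any trivial (sphere) components each $S_j$ is an admissible representative, so $|| \ \class_j \ ||\le -\chi(S_j)$ by definition of the norm. The whole statement then reduces to the inequality $\sum_{j=1}^{3}(-\chi(S_j))\le|\tri|$ together with a description of when it is attained.

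First I would classify the edge--labels inside a single ideal tetrahedron $\Delta$. Solving the $\Z_2$ cocycle conditions on the six edges shows that the label of a class is one of exactly eight patterns: the zero pattern, one of four triangle patterns (the three edges at an ideal vertex), or one of three quadrilateral patterns (the four edges met by a normal quadrilateral), and the canonical representative places at most the single corresponding normal disc. Under addition these patterns form a group isomorphic to $\Z_2^3$ whose seven non-zero elements, the four triangle and three quadrilateral types, are the points of the Fano plane; hence the three classes restrict in $\Delta$ either to a line of this plane, necessarily of type ``two triangles and one quadrilateral'' or ``three quadrilaterals'', or to a degenerate configuration in which one label is zero and the other two agree. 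The decisive combinatorial consequence, immediate from $w_1+w_2+w_3\equiv 0$, is that along each edge of $\Delta$ the number of the three discs crossing it is even, hence $0$ or $2$.

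Next I would evaluate $\sum_j(-\chi(S_j))$ from the induced cell structure on each surface, expressing $\chi(S_j)$ through the numbers of its normal triangles and quadrilaterals and its intersection points with the ideal edges. Summing over $j$ produces an expression of the shape $\mathcal Q-W+\tfrac12\mathcal T$, where $\mathcal T$ and $\mathcal Q$ are the total numbers of triangles and quadrilaterals among $S_1,S_2,S_3$ and $W=\sum_e\sum_j w_j(e)$; by the parity observation every inner sum contributes $0$ or $2$. To bound this by $|\tri|$ I would distribute each edge--crossing point over the tetrahedra around its edge and each normal arc over the two tetrahedra sharing its face, turning the global count into a sum of per--tetrahedron contributions, and then invoke the identity $|E(\tri)|=|\tri|$, valid for every ideal triangulation of a manifold whose cusps are tori (equivalently $\chi(\manifold)=0$). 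The even--crossing condition caps each tetrahedron's contribution, and summation against $|E(\tri)|=|\tri|$ yields $\sum_j(-\chi(S_j))\le|\tri|$, with each contribution maximal exactly when $\Delta$ carries three quadrilaterals.

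The main obstacle is precisely this local--to--global Euler characteristic bookkeeping: the natural per--tetrahedron share of $\chi(S_j)$ carries weights $1/d_e$ coming from the degrees $d_e$ of the ideal edges, so the estimate refuses to localise and becomes sharp only after the identity $|E(\tri)|=|\tri|$ and the parity constraint are combined, with the behaviour of $\partial S_j$ on the cusp cross--sections the delicate point. Granting the inequality, the equality case is forced: if $\sum_{0\neq\class\in H} || \ \class \ ||=|\tri|$, then every inequality above is sharp, so each $S_j$ is norm--minimising with $|| \ \class_j \ ||=-\chi(S_j)$ and every tetrahedron carries three quadrilaterals; thus each $S_j$ meets every tetrahedron in exactly one quadrilateral and contains no triangle. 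In particular $S_j$ has no vertex--linking component, and because $\manifold$ is hyperbolic, hence irreducible and atoroidal, the norm--minimising $S_j$ has no sphere or torus component, so $S_j$ is taut. Finally $|\tri|=\sum_{0\neq\class\in H} || \ \class \ ||\le c(\manifold)\le|\tri|$ shows $|\tri|=c(\manifold)$, i.e.\ $\tri$ is minimal.
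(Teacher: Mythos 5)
This paper does not actually prove the statement: it is quoted from \cite{Jaco-ideal-2020}, so your proposal can only be judged on its own terms, and there its central step has a genuine gap. Your setup is the right one (canonical normal representatives $S_1,S_2,S_3$, the Fano--plane classification of the eight admissible edge--label patterns in a tetrahedron, the parity constraint $w_1+w_2+w_3\equiv 0$, the formula $\chi(S_j)=\sum_e w_j(e)-\tfrac12 t_j-q_j$, and the identity $|E(\tri)|=|\tri|$), but the inequality you claim to extract from these ingredients does not follow from them. Write $E_+$ for the set of edges crossed by (necessarily exactly two of) the three surfaces and $E_0$ for the uncrossed edges. The bookkeeping gives exactly
\[
\sum_{j}\bigl(-\chi(S_j)\bigr)=\mathcal{Q}+\tfrac12\mathcal{T}-2|E_+|,
\]
and the per--tetrahedron cap coming from the parity condition is $\mathcal{Q}+\tfrac12\mathcal{T}\le 3|\tri|$ (maximum three quadrilaterals). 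Combining this with $|E(\tri)|=|\tri|$ yields only $\sum_j(-\chi(S_j))\le 3|\tri|-2|E_+|$, and this upper bound is $\le|\tri|$ if and only if $|E_+|\ge|E(\tri)|$, i.e. if and only if \emph{no} edge is uncrossed. Nothing forces $E_0=\emptyset$: for instance, performing a single 2--3 move on an equality-case triangulation creates an uncrossed edge of degree $3$ (surrounded by three tetrahedra each carrying two triangles and one quadrilateral), so any correct argument must handle $E_0\neq\emptyset$. A short computation shows that in general the required inequality is equivalent to
\[
\sum_{e\in E_0}\bigl(4-d(e)\bigr)\ \le\ \tfrac12\,\mathcal{T},
\]
a correlation between the degree deficits of uncrossed edges and the number of triangle discs. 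This is the real content of the bound; your own ``main obstacle'' paragraph names precisely this local-to-global problem and then asserts the conclusion rather than resolving it. (Also, the surfaces $S_j$ are closed, so there is no ``behaviour of $\partial S_j$ on the cusp cross--sections'' to worry about; that remark suggests a misidentification of where the difficulty lies.)

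A second, smaller error runs in the same direction. You discard sphere components of $S_j$ and claim $||\,\class_j\,||\le-\chi(S_j)$ ``by definition''. But a sphere has $\chi=+2$, so deleting it \emph{increases} $-\chi$; what you actually get is $||\,\class_j\,||\le-\chi(S_j)+2(\#\text{spheres})$, which is too weak for your chain of inequalities, since the counting argument computes $-\chi$ of the full canonical representative, spheres included. To use $-\chi(S_j)$ itself you must rule out sphere components, for example by first reducing to a minimal triangulation (legitimate, since the right-hand side of the theorem is triangulation-independent) and invoking 0--efficiency of minimal ideal triangulations. That reduction is also the natural platform from which to attack the uncrossed-edge inequality above; your proposal, which insists on working with a completely arbitrary ideal triangulation, forecloses this route without supplying a substitute.
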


It is shown in \cite{Jaco-ideal-2020} that the above bound implies that the monodromy ideal triangulations of the hyperbolic once-punctured torus bundles are minimal. This note presents another infinite family of once-cusped hyperbolic $3$-manifolds $\manifold_{k,n}$,  admitting triangulations $\tri_{k,n}$ which achieve the lower bound in \Cref{thm:sumofnorms}. These manifolds are shown to arise from Dehn surgery on the link $8^3_9.$ What makes these triangulations interesting is that their structure is different from the structure of the monodromy ideal triangulations, and that they do not feature layered solid tori as Dehn fillings (as, for instance, the canonical triangulations exhibited by Gu\'eritaud and Schleimer~\cite{Gueritaud-canonical-2010}).
We now state the main theorem of this paper (the relevant definitions and an outline of the proof are given in \Cref{sec:overview}):

\begin{theorem}
 \label{thm:new-family}
Let $k, n\geq 3$ be odd integers.
The identification space $\manifold_{k,n}$ of the triangulation $\tri_{k,n}$ is hyperbolic and homeomorphic to the manifold obtained from the complement of $8^3_9$ by Dehn filling the symmetric cusp with slope $(-k,1)$ and one of the other cusps with slope $(n+1,1)$. Here, the natural geometric framing is used for the cusps. The triangulation $\tri_{k,n}$ achieves equality in 
\Cref{thm:sumofnorms} with $H = H_2 (\manifold_{k,n},\Z_2)\cong\Z_2\oplus\Z_2$, and, in particular, $\manifold_{k,n}$ has complexity $k+n.$
\end{theorem}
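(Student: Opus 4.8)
The plan is to establish three things about the family $\tri_{k,n}$: that each identification space $\manifold_{k,n}$ is the claimed Dehn filling of $8^3_9$ and is hyperbolic; that the triangulation $\tri_{k,n}$ attains equality in \Cref{thm:sumofnorms}; and to extract the complexity as a corollary of that equality. Since \Cref{thm:sumofnorms} already packages the hard inequality, my main task is to verify its equality hypotheses combinatorially, and the combinatorial input will do most of the work.

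First I would fix an explicit combinatorial description of $\tri_{k,n}$ as a gluing of $k+n$ ideal tetrahedra, presumably built from two ``blocks'' of sizes governed by $k$ and $n$ in the style of a layered or stacked construction. From this description I would read off the edges, their labels under each non-zero class $\class \in H_2(\manifold_{k,n},\Z_2)\cong\Z_2\oplus\Z_2$, and the resulting canonical normal representatives. The key numerical step is then to compute $||\,\class\,||$ for each of the three non-zero classes and show that $\sum_{0\neq\class\in H}||\,\class\,|| = k+n = |\tri_{k,n}|$. For the lower bound on each norm I would exhibit the canonical normal representative as a properly embedded surface and check, using the equality clause of \Cref{thm:sumofnorms} as a target, that it is taut (no sphere or torus components) and meets every tetrahedron in exactly one quadrilateral disc; since every tetrahedron is quadrilaterally met by exactly one of the three representatives, the three quadrilateral counts partition the $k+n$ tetrahedra, giving the sum directly. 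The matching upper bound $||\,\class\,||\le -\chi$ of the candidate surface is automatic from the definition of the norm, so equality follows once tautness is confirmed.

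For the topological identification I would exhibit a sequence of Pachner-type moves, edge-unfoldings, or an explicit face-pairing isomorphism carrying $\tri_{k,n}$ to a known ideal triangulation of the relevant Dehn filling of $8^3_9$, tracking the peripheral curves to confirm the slopes $(-k,1)$ and $(n+1,1)$ in the geometric framing. Hyperbolicity I would obtain either by solving Thurston's gluing equations to produce a genuine (positively oriented) geometric solution for a base case and then arguing persistence along the family, or by appealing to the fact that the surfaces found above are taut and the triangulation is geometrically incompressible, invoking Dehn-filling rigidity away from the finitely many exceptional slopes; given the constraints $k,n\ge 3$ this should keep us inside the hyperbolic range. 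The complexity statement is then immediate: the equality clause of \Cref{thm:sumofnorms} forces $c(\manifold_{k,n}) = k+n$, matching $|\tri_{k,n}|$, so $\tri_{k,n}$ is minimal.

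The main obstacle will be verifying tautness of the three canonical normal representatives uniformly across the whole family, rather than case by case. Concretely, I expect the delicate point to be ruling out torus components (and checking proper embeddedness at the cusps) for each class simultaneously while confirming the single-quadrilateral-per-tetrahedron condition, since these are precisely the combinatorial constraints that the equality case of \Cref{thm:sumofnorms} demands. Once the edge-labellings are organized cleanly as functions of $k$ and $n$, I anticipate the tautness and quadrilateral-count checks to reduce to a finite pattern verification that propagates through the family by an inductive block argument, so the real work is setting up that bookkeeping correctly.
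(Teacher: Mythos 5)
The central gap is your treatment of tautness, which is where essentially all of the work in this theorem lies. With the paper's definition, $||\,\class\,||$ is the negative of the \emph{maximal} Euler characteristic over all embedded representatives, so exhibiting the three quadrilateral surfaces $\surface_1,\surface_2,\surface_3$ only gives $||\,\class_i\,||\le -\chi(\surface_i)$, and hence $\sum_i ||\,\class_i\,||\le k+n$ --- which is the direction \Cref{thm:sumofnorms} already provides. Equality requires the reverse inequalities, i.e.\ that no surface representing $\class_i$ has larger Euler characteristic than $\surface_i$; that is precisely what tautness \emph{means}, so your statement that ``equality follows once tautness is confirmed'' is circular, and the equality clause of \Cref{thm:sumofnorms} cannot be ``used as a target'' to certify it, since that clause lists consequences of equality, not a criterion for it. Tautness is a global minimization over all surfaces in the class; it is not a finite pattern verification on $\tri_{k,n}$ and no ``inductive block argument'' on the triangulation is known to substitute for it. The paper's actual proof (\Cref{sec:mainresult}) passes to a second triangulation $\tri'_{k,n}=\tri'\cup\lst(1,n)\cup\lst(1,k-1)$ of the same manifold, where $\tri'$ is a fixed $17$--tetrahedron triangulation of the link exterior with two torus boundary components; it enumerates all $900$ fundamental normal surfaces of $\tri'$ with Regina, uses Jaco--Sedgwick (for classes meeting one boundary torus) and Bachman--Derby-Talbot--Sedgwick (to control Haken sums for the class meeting both tori) to reduce to finitely many candidates with the right boundary patterns, and then uses the classification of normal surfaces in layered solid tori (\Cref{cor:lst}) and the Bredon--Wood formula (\Cref{prop:boundingcurve}) to decide how each candidate extends across the filling tori. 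Only this computation shows $||\,\class_1\,||=\tfrac{k+1}{2}$, $||\,\class_2\,||=\tfrac{n+1}{2}$, $||\,\class_3\,||=\tfrac{n+k-2}{2}$, and hence that the $\surface_i$ are taut; your outline contains no mechanism that could deliver these lower bounds.

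Your hyperbolicity and identification steps are also gaps rather than proofs. ``Persistence along the family'' of solutions to the gluing equations is not a theorem, and ``Dehn-filling rigidity away from the finitely many exceptional slopes'' does not justify $k,n\ge 3$: Thurston's theorem does not tell you \emph{which} slopes are exceptional, so an effective statement is needed. The paper uses Lackenby's combinatorial word hyperbolic Dehn surgery theorem with the explicit angle structure in which every cusp triangle has angles $\tfrac{\pi}{2},\tfrac{\pi}{4},\tfrac{\pi}{4}$, giving combinatorial length bounds that cover all $k,n\ge 17$, and certifies the finitely many remaining cases $3\le k,n<17$ by angle structures found with Regina (Casson's criterion), followed by Thurston's hyperbolization for Haken manifolds. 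Finally, since the filled manifold $N_{k,n}$ and the identification space $\manifold_{k,n}$ arise from different constructions, the paper still must identify them: it does so via a homotopy equivalence fixed near the cusp, doubling along the boundary, and Waldhausen's theorem for Haken manifolds. Your proposal to exhibit Pachner moves between the two triangulations while tracking peripheral slopes is not carried out and would in any case require first determining the natural geometric framing on the cusps of the $8^3_9$ complement, which the paper computes explicitly.
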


This theorem, and its proof, have the following consequences.

First, our approach to compute the $\mathbb{Z}_2$--norm introduces new techniques that extend the tool kits from \cite{Jaco-norm-2020, Jaco-ideal-2020, Jaco-coverings-2011} and is also applicable to the computation of the Thurston norm of Dehn fillings.

Second, the construction of the minimal triangulations generalises. We do not provide full details, and simply list a further family of triangulations that we conjecture to be minimal in \Cref{sec:constructions}.

Third, it remains an open problem to classify all ideal triangulations of cusped hyperbolic 3--manifolds that achieve the lower bound in \Cref{thm:sumofnorms}, and to provide a complete list of all underlying cusped hyperbolic 3--manifolds. In contrast, an analogous bound for the complexity of closed 3--manifolds and a complete characterisation of all tight examples are given in \cite{Jaco-norm-2020}.

\textbf{Acknowledgements.} 
Research of the authors is supported in part under the Australian Research Council's Discovery funding scheme (project number DP190102259). 

%%%%%%%%%%%%%%%%%

\section{The new infinite family}
\label{sec:overview}

The first member, $\tri_{3,3}$, of the infinite family $\tri_{k,n}$ was found by experimentation. Of the $9\,596$ minimal triangulations of $4\,815$ cusped hyperbolic $3$--manifolds contained in the orientable cusped hyperbolic census (shipped, for instance, with \texttt{Regina}~\cite{regina}), there are exactly nine manifolds for which the lower bound in 
\Cref{thm:sumofnorms} is attained. Eight of these are monodromy ideal triangulations of once-punctured torus bundles, but the remaining one, a triangulation of manifold \texttt{s781}, is not. 

One can check that this minimal ideal triangulation $\tri_{3,3}$ of \texttt{s781} achieves the lower bound as follows.
It has isomorphism signature \texttt{gLLMQbeefffehhqxhqq} and six ideal tetrahedra. The underlying hyperbolic manifold $\manifold_{3,3}$ has one cusp and $H_1 (\manifold_{3,3}, \mathbb{Z}) \cong \mathbb{Z} \oplus \mathbb{Z}_2 \oplus \mathbb{Z}_4.$
The space of all closed normal surfaces has 11 embedded fundamental surfaces consisting of one vertex linking torus, seven orientable surfaces of Euler characteristic at most $-2$ and three non-orientable surfaces of Euler characteristic equal to $-2.$ The non-orientable surfaces represent distinct non-trivial classes in $H_2 (\manifold_{3,3}, \mathbb{Z}_2).$ Since each even torsion class has a taut representative that is isotopic to a normal surface, and all non-trivial fundamental normal surfaces in $\tri_{3,3}$ have Euler characteristic $\leq -2$, it follows that $\tri_{3,3}$ attains the lower bound in \Cref{thm:sumofnorms}.

We now briefly describe how this example is extended to the infinite family of triangulations $\tri_{k,n}$, $k,n \geq 3$ odd. 

The ideal triangulation $\tri_{k,n}$ is build from a $k$--tetrahedron solid torus $T_k$ and an $n$--tetrahedron solid torus $T_n$, $k, n\geq 3$ odd, each with two punctures on the boundary, by identifying the eight boundary triangles (four per solid torus) in pairs. The identification space $\manifold_{k,n}$ has the required torsion classes in homology, namely $H_1 (\manifold_{k,n}, \mathbb{Z}) \cong \mathbb{Z} \oplus \mathbb{Z}_2 \oplus \mathbb{Z}_{k+1}$ (note that $ H_1 (\manifold_{k,n}, \mathbb{Z})$ does not depend on $n$). Hence we have $H_2 (\manifold_{k,n}, \mathbb{Z}_2) \cong \mathbb{Z}_2\oplus \mathbb{Z}_2$. We remark that we do not consider homology relative to the boundary. The triangulations of the solid tori are described in \Cref{sec:torus}, and the triangulations $\tri_{k,n}$ in \Cref{sec:triangulations}. 

\begin{figure}[hb]
 \centerline{\includegraphics[width=5cm]{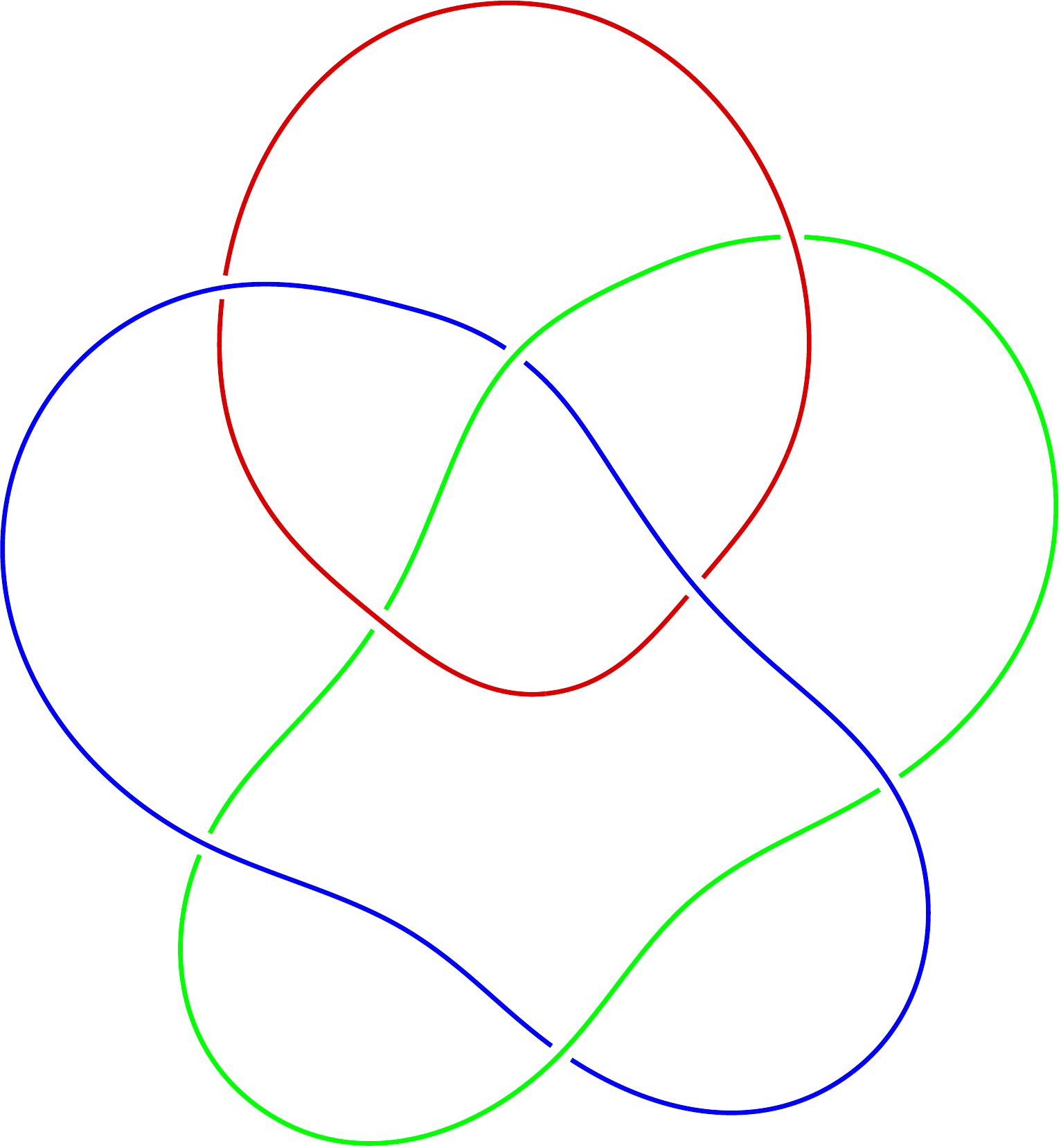}}
 \caption{Three component hyperbolic (non-alternating) link $8^3_9$ (Alexander-Briggs (Rolfsen) Number~\cite[Appendix C]{rolfsen2003knots}), L8a16 (Thistlethwaite Number~\cite{katlas}). Picture from \cite{katlas}. \label{fig:link}}  
\end{figure}

In order to apply \Cref{thm:sumofnorms}, we need to show that each $\manifold_{k,n}$ is hyperbolic, i.e.\thinspace has a complete hyperbolic structure of finite volume. To this end, we show in \Cref{sec:manifolds} that the manifolds $\manifold_{k,n}$ are obtained by Dehn surgery on the link $8^3_9$  and determine the surgery coefficients.
The complement of the three component link $8^3_9$, shown in \Cref{fig:link}, is hyperbolic and admits a symmetry which fixes one of the cusps and interchanges the other two. We refer to the former as the {\em symmetric} cusp, and it is shown in \textcolor{red}{red} in \Cref{fig:link}.
The manifolds $\manifold_{k,n}$ are all obtained from the complement of $8^3_9$ by filling two of its cusps, where one of these must be the symmetric cusp. An application of Lackenby's combinatorial word hyperbolic Dehn surgery theorem~\cite[Theorem 4.9]{Lackenby-word-2000} shows that $\manifold_{k,n}$ is hyperbolic for all $k,n \geq 17.$ A computation with Regina~\cite{regina} extends this to all $k,n \geq 3.$

We next determine the norm of the non-trivial classes in $H_2 (\manifold_{k,n}, \mathbb{Z}_2).$ 
This is achieved in \Cref{sec:mainresult} by using a different triangulation of $\manifold_{k,n}$, namely a triangulation $\tri'_{k,n}$ that uses the description of $\manifold_{k,n}$ as Dehn surgery on the link $8^3_9.$ 
This allows us to analyse the space of \emph{all} normal surfaces representing the homology classes. Namely, it reduces the argument to a calculation of all normal surfaces with a specific boundary pattern in the link complement, together with an analysis of how these surfaces extend to the surgery tori.

This completes the outline of the proof of \Cref{thm:new-family}. Below is a quick guide to the notation used for the various triangulations in this paper:
\begin{align*}
&\tri  \;\;\;\; \text{ideal triangulation of the complement of the link } 8^3_9 \text{ in } \mathbb{S}^3 \\
&\tri'  \;\;\; \text{triangulation of the exterior of the link } 8^3_9  \text{ in }  \mathbb{S}^3 \text{ with one boundary component removed}\\
&\manifold_{k,n}  \cong\text{Manifold('8\^{}3\_9')}(\; \infty\; , (n+1,-1), (k+1,1)) \\
&\tri_{k,n} = T_k \cup T_n \text{ ideal triangulation of } \manifold_{k,n} \\
&\tri'_{k,n} = \tri' \cup \lst(1,m) \cup \lst(1, k-1)  \text{ triangulation of } \manifold_{k,n}  \text{ with one ideal and two material vertices } 
\end{align*}

%%%%%%%%%%%%%%%%%

\section{An infinite family of solid tori}
\label{sec:torus}

The construction of our infinite families of minimal triangulations of once-cusped hyperbolic manifolds is based on a family of solid torus triangulations $T_m$. Start with a 2--triangle standard triangulation of the annulus. The two boundary curves of the annulus are edges in this triangulation, and each of the remaining three edges is incident with both boundary edges. One now iteratively layers tetrahedra on one side of this annulus, such that the initial triangulation of the annulus is in the boundary of the solid torus, and each interior edge has degree four. This completely characterises these triangulations combinatorially. In what follows, we mainly introduce notation to make this explicit and we highlight some features that will be relevant later.

Let $m\ge 1$ and $\Delta_1, \dots , \Delta_m$ be a collection of Euclidean 3--simplices. The vertices of each $\Delta_j$ are labelled $0, 1, 2, 3$, and for each subset $\Lambda \subseteq \{0, 1, 2, 3\},$ $\Delta_j(\Lambda)$ is the subsimplex spanned by $\Lambda.$ 
The triangulation $T_1$ is obtained by making the edge identification $\Delta_1 (03) = \Delta_1 (12).$ 
For $m>1,$ $T_{m}$ is obtained from $T_{m-1}$ by adding $\Delta_m$ via the face pairings:
\begin{align*}
& \Delta_{m} (102) \,\, \mapsto \,\, \Delta_{m-1} (013), \\
& \Delta_{m} (023) \,\, \mapsto \,\, \Delta_{m-1} (023), \; \text{if $m$ is even},\\
& \Delta_{m} (123) \,\, \mapsto \,\, \Delta_{m-1} (123), \; \text{if $m$ is odd}.
\end{align*}
If $m$ is even, this is a layering of $\Delta_{m}$ on the edge $\Delta_{m-1} (03)$; and if $m$ is odd, this is a layering of $\Delta_{m}$ on the edge $\Delta_{m-1} (13).$

It follows from the construction that the faces $\Delta_1(012)$ and $\Delta_1(023)$ form an annulus in the boundary of $T_{m}$, and that for $m>1,$ $T_m$ is a solid torus obtained from $T_{m-1}$ by layering a tetrahedron on an edge not contained in the boundary faces $\Delta_1(012)$ and $\Delta_1(023)$. The different pairings for $m$ odd or even ensure that each interior edge of $T_m$ has degree four.

The two boundary edges of $T_m$ corresponding to a longitude of the solid torus are $\Delta_1(01) = \ldots = \Delta_m(01)$ and $\Delta_1(23) = \ldots = \Delta_m(23)$. We denote $\Delta_1(01) = \ldots = \Delta_m(01)$ by $\longitude$, see \Cref{fig:meridian}.

We note that $T_m$ has two vertices $\Delta_1(0) = \Delta_1(1) = \ldots = \Delta_m(0) = \Delta_m(1) $ and $\Delta_1(2) = \Delta_1(3) = \ldots = \Delta_m(2) = \Delta_m(3)$. 

The boundary of the solid torus $T_m$ is a 4--triangle, 2--vertex torus. For $m$ odd this results in boundary faces $\Delta_m (013),$ $\Delta_m (123),$ $\Delta_1 (012),$ and $\Delta_1 (023),$ see also \Cref{fig:meridian}.

\begin{figure}[p]
 {\includegraphics[width=0.91\textwidth]{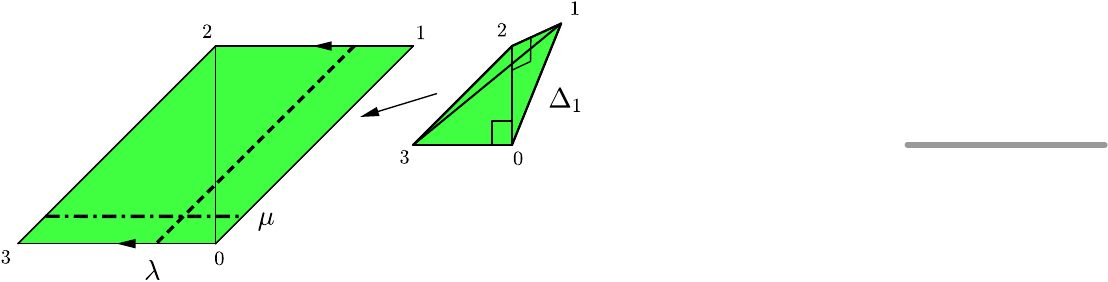}}

 \vspace{0.1cm} \hrule \vspace{0.1cm}

 {\includegraphics[width=0.91\textwidth]{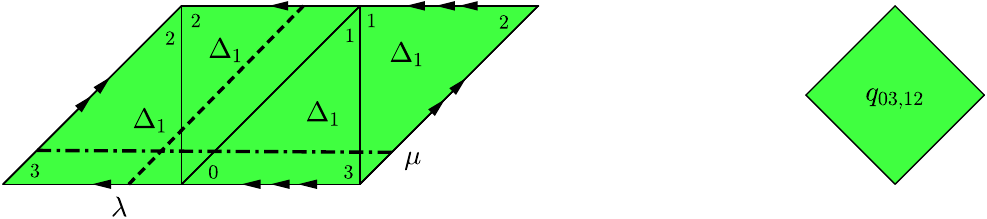}}

 \vspace{0.1cm} \hrule \vspace{0.1cm}

 {\includegraphics[width=0.91\textwidth]{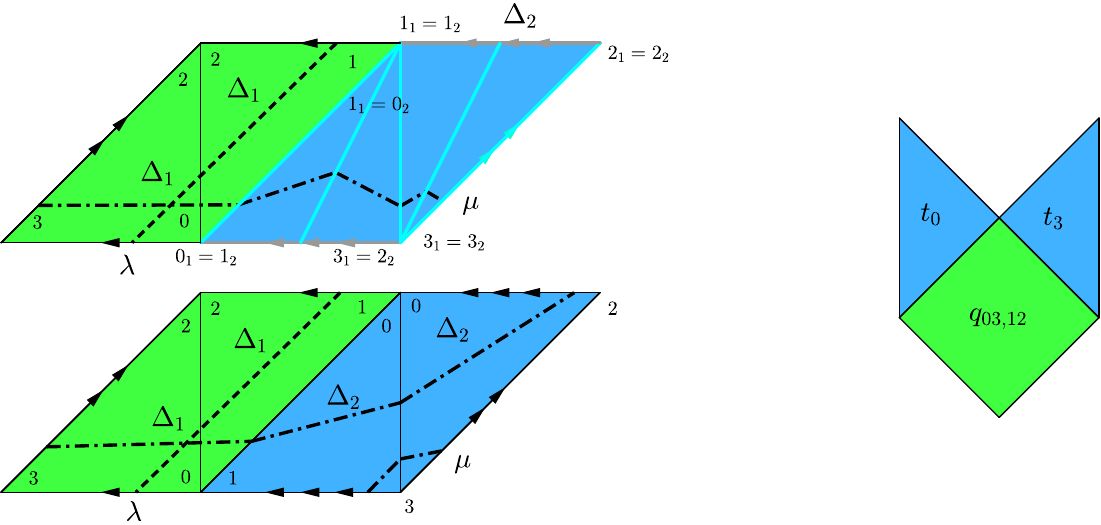}}
 
 \vspace{0.1cm} \hrule \vspace{0.1cm}
 
 {\includegraphics[width=\textwidth]{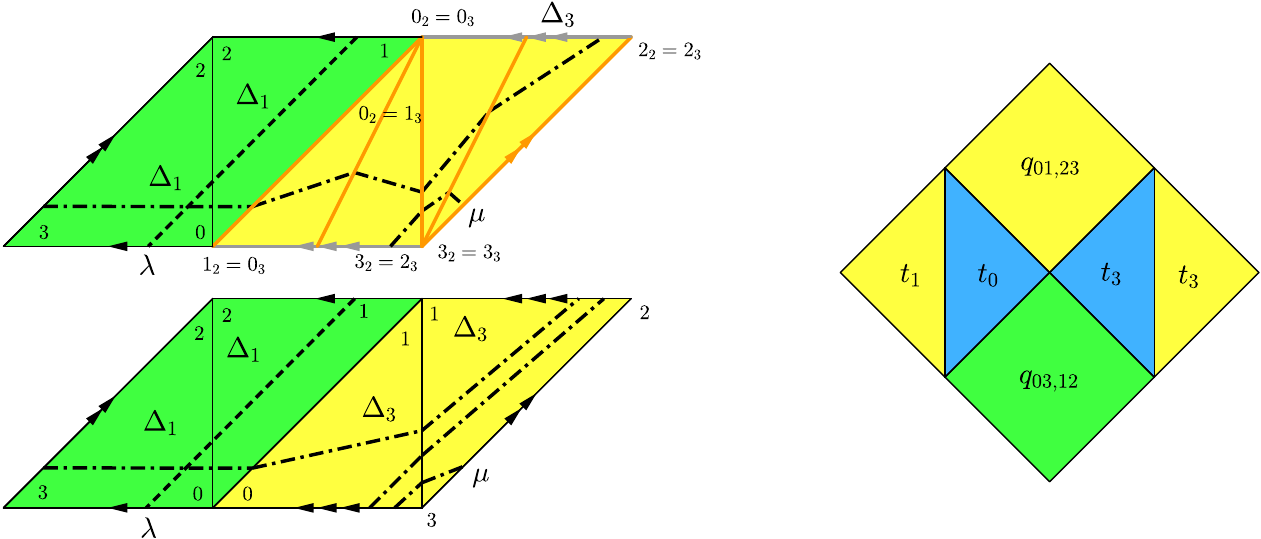}}
 \caption{Construction of $T_m$, longitude, meridian, and meridian disc. Top: the base annulus. Second, third, and fourth row: $T_1$, $T_2$, and $T_3$ respectively. In each step, another tetrahedron is layered onto the existing triangulation.  Right column: the normal pieces of the corresponding meridian disc. \label{fig:meridian}}
\end{figure}

In order to determine the meridian $\meridian$ of $T_m$ we trace the meridian disc of $T_m$ through the layerings:
\begin{itemize}
  \item In the 2--triangle annulus the seed of the meridian disc is a horizontal arc connecting the two boundary components (\Cref{fig:meridian} top row). 
  \item Layering $\Delta_1$ to the annulus adds a single horizontal quad of type $q_{03/12}$ in $\Delta_1$ to the seed of the meridian disc (\Cref{fig:meridian}, second row).
  \item Layering $\Delta_2$ on edge $\Delta_1 (03) = \Delta_1 (12)$ adds two normal triangles in $\Delta_2$ (one of type $t_0$ and one of type $t_3$) to the meridian disc (\Cref{fig:meridian}, third row).
  \item Layering $\Delta_3$ on edge $\Delta_2 (02) = \Delta_2 (13)$, adds two normal triangles (of types $t_1$ and $t_3$ respectively) and one normal quadrilateral (of type $q_{01/23}$) in $\Delta_3$ to the meridian disc (\Cref{fig:meridian}, bottom row).
  \item Layering $\Delta_j$, $j \geq 4$, adds two more normal triangles (of types as before) and $m-2$ normal quadrilaterals of type $q_{01/23}$ to the meridian disc. See \Cref{fig:general} for the meridian disc in $T_m$ and its boundary curve. 
\end{itemize}

\begin{figure}[t]
 \centerline{\includegraphics[width=\textwidth]{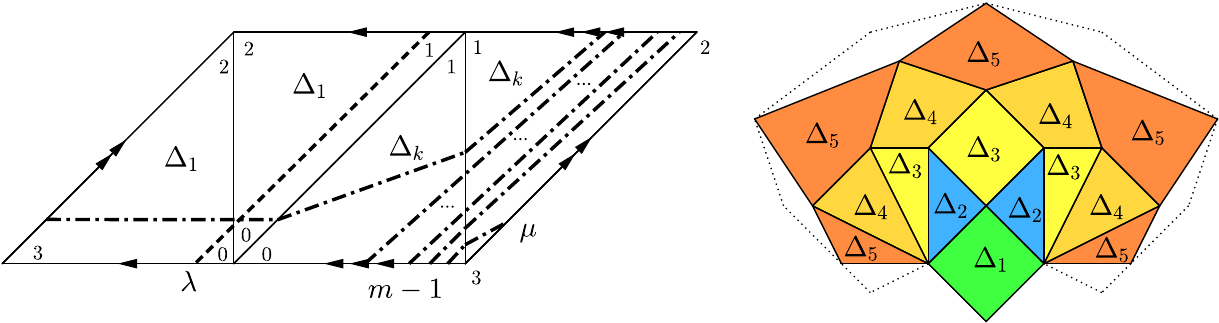}}
 \caption{Left: boundary of $T_m$ with longitude and meridian. Right: meridian disc of $T_m$ as a normal surface.  \label{fig:general}}
\end{figure}

The triangulation $T_m$ admits three normal surfaces $F_1$, $F_2$, and $F_3$ with exactly one normal quadrilateral per tetrahedron and every normal quadrilateral type features in exactly one of these three surfaces:
\begin{itemize}
  \item Starting with quadrilateral $q_{01/23}$ in $\Delta_1$ we obtain a separating annulus $F_1$ intersecting the boundary of $T_m$, $m\geq 2$ arbitrary, in two longitudes, denoted by $\gamma_1$.
  \item Starting with quadrilateral $q_{02/13}$ in $\Delta_1$ we obtain a one-sided (non-orientable) surface $F_2$ intersecting the boundary of $T_m$, $k\geq 3$ and odd, horizontally in curve $\gamma_2$.
  \item Starting with quadrilateral $q_{03/12}$ in $\Delta_1$ we obtain another one-sided (non-orientable) surface $F_3$ intersecting the boundary of $T_m$, $m\geq 3$ and odd, diagonally in curve $\gamma_3$.
\end{itemize}
See \Cref{fig:canonical} for an illustration.

\begin{figure}[hbt]
 \centerline{\includegraphics[width=\textwidth]{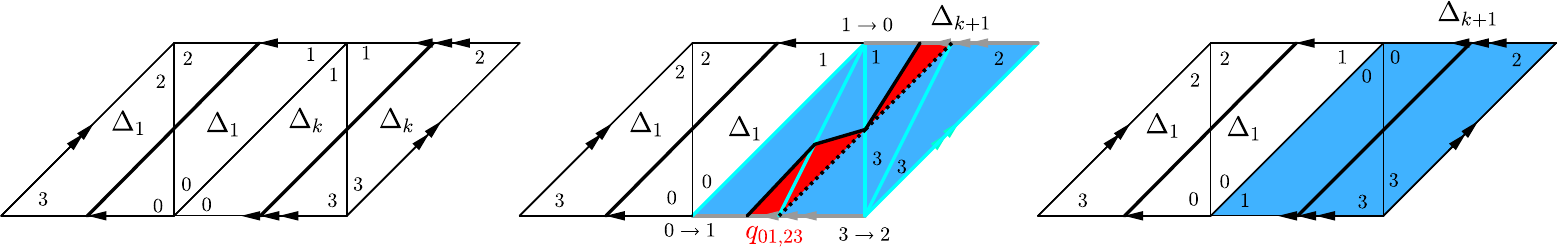}}

 \vspace{0.3cm} \hrule \vspace{0.3cm}

 \centerline{\includegraphics[width=\textwidth]{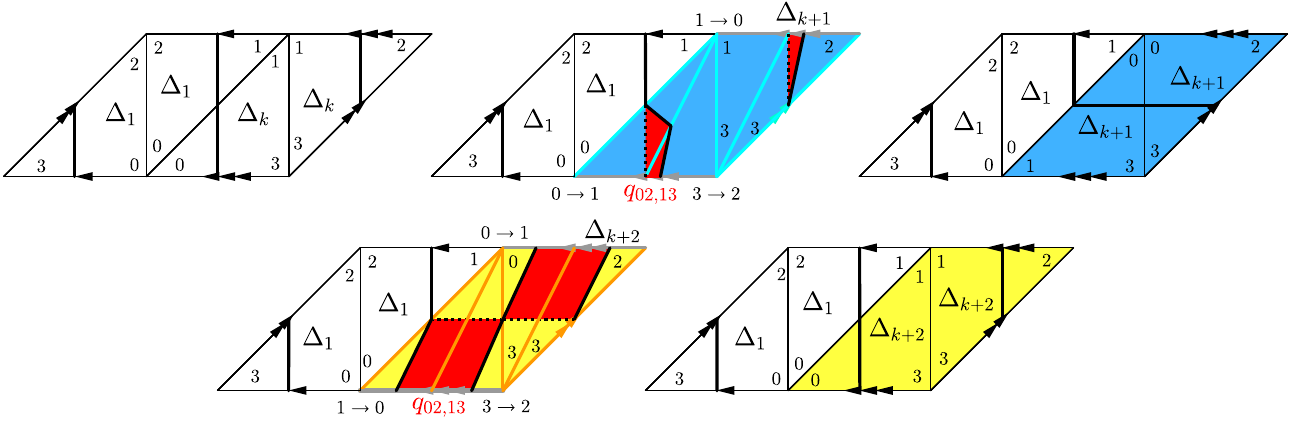}}

 \vspace{0.3cm} \hrule \vspace{0.3cm}

 \centerline{\includegraphics[width=\textwidth]{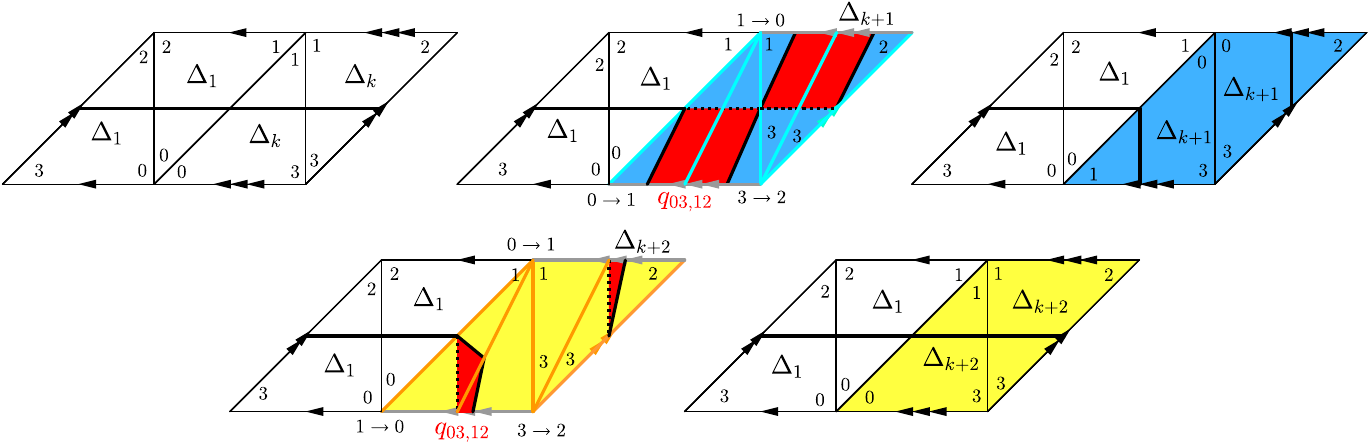}}
 
 \caption{Boundary patterns of normal surfaces $F_1$, $F_2$, and $F_3$ in $T_m$. The boundary pattern of $F_1$ does not depend on $m$, whereas the boundary patterns of $F_2$ and $F_3$ depend on the parity of $m$.  \label{fig:canonical}}
\end{figure}

Due to the following result, we know when and how a curve on the boundary of a solid torus extends to a one-sided incompressible properly embedded (connected) surface in its interior. 

\begin{proposition}[Corollary 2.2 in \cite{Frohman-one-sided-1986}]
  \label{prop:boundingcurve}
Suppose $T$ is a solid torus, $\meridian$ a standard meridian and $\longitude$ a longitude for $T.$  
  A one-sided connected incompressible surface $S \subset T$ has boundary a single curve homotopic with $2p \longitude + q \meridian$, for some $p, q\in \mathbb{Z}$ satisfying $p\neq 0$, $2p$ and $q$ co-prime. Conversely, every simple closed curve on $\partial T$ homotopic with $2p \longitude + q \meridian$, where $p\neq 0$, $2p$ and $q$ co-prime, is the boundary of a one-sided incompressible surface in $T$.
  
Moreover, the negative Euler characteristic of $S$ is determined by $[\partial S] = 2p \longitude + q \meridian$ and given by the number $N(2p,q)$ defined by $N(2p,q)=N(2(p-Q),q-2m)+1$, where $Q$ is the smallest positive integer so that there exists an integer $m$ such that $Qq =2pm \pm 1$ and $N(2p,1)=p-1$.
\end{proposition}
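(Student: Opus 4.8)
The plan is to prove the three assertions---admissibility of the boundary slope, realizability of every admissible slope, and the Euler characteristic recursion---by reducing everything to an inductive ``layering'' argument that mirrors the continued fraction expansion of the slope $q/2p$. The homological and geometric constraints on $\partial S$ are soft and come first; the realizability and the formula for $N(2p,q)$ are then handled simultaneously by induction, with the base case $q=1$ treated by an explicit construction.

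First I would pin down the admissible slopes. Write $[\partial S] = a\longitude + q\meridian$ in $H_1(\partial T;\Z)$. Passing to $\Z_2$--coefficients, $S$ defines a relative class in $H_2(T,\partial T;\Z_2)$, so $[\partial S]$ dies in $H_1(T;\Z_2)$; since $\meridian \mapsto 0$ and $\longitude$ maps to the generator, this forces $a$ to be even, say $a = 2p$. (Geometrically this is the statement that a meridian disc $D$ meets $S$ in arcs, so $\#(\partial S \cap \meridian) = \#(\partial S \cap \partial D)$ is even, being twice the number of arcs.) Coprimality $\gcd(2p,q)=1$ is immediate because $\partial S$ is a single simple closed curve on the torus $\partial T$, hence primitive. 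Finally $p\neq 0$ uses incompressibility: if $p=0$ then $\partial S = \pm\meridian$, and an incompressible surface in a solid torus with meridian boundary is isotopic to the meridian disc, which is two-sided, contradicting one-sidedness. This establishes the first assertion, and identifies one-sidedness (equivalently, nonorientability, since $T$ is orientable) as precisely what permits a \emph{nonzero} even longitudinal coefficient.

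For the converse and the Euler characteristic I would induct on the complexity of the slope via the recursion in the statement. In the base case $q=1$, the slope is $2p\longitude + \meridian$, and I would build an explicit one-sided spanning surface: the core Möbius band (slope $2\longitude+\meridian$, with $-\chi = 0 = N(2,1)$) with $p-1$ bands attached, giving a nonorientable surface with connected boundary and $-\chi = p-1 = N(2p,1)$; a routine innermost-disc check confirms incompressibility. For the inductive step, choosing $Q$ minimal with $Qq = 2pm\pm 1$ exhibits $T$ as obtained from a smaller solid torus $T'$ by a single layering (a change of meridian disc), under which the admissible reduced slope $(2(p-Q),\, q-2m)$ for $T'$ pulls back to $(2p,q)$. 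Cutting $S$ along the cocore of this layer removes exactly one crosscap, producing an incompressible one-sided surface $S'\subset T'$ of the reduced slope with $-\chi(S) = -\chi(S')+1$. Since $\gcd(2p,q)=1$ with $Qq\equiv \pm1 \pmod{2p}$ keeps the reduced slope admissible and strictly simpler, the induction terminates at $q=1$, yielding both existence and the recursion $N(2p,q) = N(2(p-Q),\,q-2m)+1$.

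The hard part is the \emph{exactness} of the Euler characteristic recursion, as opposed to the inequality. The layering construction readily produces an incompressible one-sided surface of the stated complexity, so the real content is that no incompressible surface of the given slope can have smaller $-\chi$; equivalently, that such a surface is unique up to isotopy. I would attack this by showing that cutting $T$ along an arbitrary one-sided incompressible $S$ of slope $2p\longitude + q\meridian$ leaves a complementary region that is again a (simpler) solid torus, so that the inductive reduction applies verbatim and is forced. Controlling this complementary region---in particular ruling out compressions or inessential arcs that would shortcut the continued fraction descent, and verifying that the boundary of the twisted $I$--bundle neighbourhood of $S$ remains incompressible on the other side---is where the genuine difficulty lies, and is precisely the step carried by the classification theorem underlying Frohman's corollary.
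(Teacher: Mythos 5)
First, a point of context: the paper offers no proof of this proposition at all --- it is imported verbatim as Corollary 2.2 of Frohman's paper, and the recursion $N(2p,q)$ is the classical Bredon--Wood formula. So your attempt can only be measured against the literature, and against that standard it has genuine gaps.

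The soft first paragraph is mostly fine: the $\Z_2$--homology (or arc-counting) argument for evenness works, and $p\neq 0$ can be salvaged (rather than asserting that an incompressible surface with meridian boundary is isotopic to the meridian disc --- a claim that itself needs boundary-incompressibility --- cap $\partial S$ off with a meridian disc: this yields a closed non-orientable surface embedded in $T$, which is impossible since $H_2(T;\Z_2)=0$). But note that connectedness of $\partial S$ is a \emph{conclusion} of the proposition ("connected" in the hypothesis refers to $S$), and you use it (primitivity of a single curve $\Rightarrow$ coprimality) without proving it; ruling out several parallel essential boundary curves needs its own boundary-compression/annulus argument. The decisive gap, however, is the one you flag yourself. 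Your induction constructs \emph{some} one-sided incompressible surface of each admissible slope with $-\chi = N(2p,q)$ --- existence plus an upper bound --- and even this rests on two unproved steps: incompressibility of the banded surfaces is not a "routine innermost-disc check" for one-sided surfaces (one normally argues via $\pi_1$--injectivity of the boundary of the twisted $I$--bundle neighbourhood), and the assertion that an arbitrary incompressible $S$ can be cut along the cocore of a layering so as to remove exactly one crosscap, stay incompressible, and land in the reduced slope is exactly the content that needs proof, not a consequence of choosing $Q$ minimal. The proposition claims the exact value of $-\chi(S)$ for \emph{every} one-sided connected incompressible $S$ of the given slope, i.e. minimality and uniqueness of the complexity, and at precisely that point your proposal defers to "the classification theorem underlying Frohman's corollary" --- which is the statement being proved. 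As written the argument is therefore circular at its essential step: what is missing is the Bredon--Wood/Frohman argument that any such $S$ can be isotoped to meet a meridian disc in a single essential arc (equivalently, that the complement of a twisted $I$--bundle neighbourhood of $S$ is a simpler solid torus), which is what forces the continued-fraction descent and hence the equality $-\chi(S)=N(2p,q)$.
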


\begin{corollary}
\label{cor:chi}
Let $m\ge 3$ be odd.
The surface $F_2$ with boundary $\gamma_2$ is an incompressible one-sided surface with negative Euler characteristic $(m-1)/2$ properly embedded in $T_m$, and the surface $F_3$ with boundary $\gamma_3$ is an incompressible one-sided surface with negative Euler characteristic $(m-3)/2$ properly embedded in $T_m.$ 
\end{corollary}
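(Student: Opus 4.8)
The plan is to determine the boundary slopes $[\gamma_2]$ and $[\gamma_3]$ on $\partial T_m$ in the basis given by the longitude $\longitude = \Delta_1(01)$ and the meridian $\meridian$, and then to invoke \Cref{prop:boundingcurve}. Since $F_2$ and $F_3$ are, by their construction in \Cref{sec:torus}, connected, properly embedded and one-sided, the first assertion of \Cref{prop:boundingcurve} already guarantees that their slopes have the form $2p\longitude + q\meridian$ with $p \neq 0$ and $2p, q$ coprime; the substance of the argument is to pin down the integers $p$ and $q$ for each curve and then to evaluate the negative Euler characteristic $N(2p,q)$.

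First I would extract the two slopes from the boundary patterns recorded in \Cref{fig:canonical}, combined with the description of the meridian $\meridian$ of $T_m$ traced through the layerings in \Cref{fig:meridian} and \Cref{fig:general}. The combinatorial boundary pattern of $\gamma_2$ (horizontal) and of $\gamma_3$ (diagonal) is fixed by the parity of $m$, so the $m$--dependence of the homological slope enters only through how $\meridian$ winds relative to the fixed longitude $\longitude$; reading $\gamma_2$ and $\gamma_3$ against $\meridian$ then yields slopes whose longitudinal winding grows linearly in $m$. A useful anchor is the base case $m = 3$: there I expect $F_3$ to be the core M\"obius band, with $[\gamma_3] = 2\longitude + \meridian$ and hence $-\chi(F_3) = N(2,1) = 1 - 1 = 0 = (m-3)/2$, while $F_2$ has $[\gamma_2] = 4\longitude + \meridian$ and $-\chi(F_2) = N(4,1) = 2 - 1 = 1 = (m-1)/2$. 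I would use these to calibrate the general pattern, expecting $[\gamma_2] = (m+1)\longitude + \meridian$ and $[\gamma_3] = (m-1)\longitude + \meridian$.

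Having identified the slopes, I would check that $2p$ and $q$ are coprime and $p \neq 0$ (automatic once the slopes are in hand and $m \ge 3$), so that \Cref{prop:boundingcurve} applies and shows each of $\gamma_2, \gamma_3$ bounds a one-sided incompressible surface; since \Cref{prop:boundingcurve} determines the Euler characteristic, and hence the surface up to isotopy, from the slope alone, the resulting incompressible surface is identified with $F_2$ respectively $F_3$, which gives their incompressibility. It then remains to evaluate $N(2p,q)$ and match it to $(m-1)/2$ and $(m-3)/2$. If the slopes indeed have $q = 1$, as the base case suggests, this is immediate from $N(2p,1) = p - 1$; otherwise I would unwind the recursion $N(2p,q) = N(2(p-Q), q-2s)+1$ by induction on the odd integer $m$, each reduction step stripping off one tetrahedron and decrementing $-\chi$ by one down to the base case. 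The main obstacle is the first step: correctly reading the $m$--dependence of the two slopes out of the combinatorics of the alternating layering, and in particular verifying that the longitudinal winding is exactly $m+1$ for $\gamma_2$ and $m-1$ for $\gamma_3$ rather than incurring a parity or off-by-one error. As a consistency check, the two Euler characteristics differ by exactly $1$, matching the single extra pair of longitudinal wraps that distinguishes $\gamma_2$ from $\gamma_3$.
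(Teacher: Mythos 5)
Your slope computation and the use of \Cref{prop:boundingcurve} follow the same route as the paper: one reads $\gamma_2 = (m+1)\longitude + \meridian$ and $\gamma_3 = (m-1)\longitude + \meridian$ off \Cref{fig:canonical}, and then $N(2p,1)=p-1$ gives the values $(m-1)/2$ and $(m-3)/2$. However, your final step contains a genuine gap. You argue that since \Cref{prop:boundingcurve} ``determines the Euler characteristic, and hence the surface up to isotopy, from the slope alone,'' the incompressible surface it produces ``is identified with $F_2$ respectively $F_3$,'' and that this yields their incompressibility. This inference is circular. \Cref{prop:boundingcurve} only makes assertions about \emph{incompressible} surfaces: some incompressible one-sided surface with boundary $\gamma_i$ exists, and every such surface has $-\chi = N(2p,q)$. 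It says nothing about an arbitrary properly embedded one-sided surface with that boundary; for instance, $\gamma_2$ also bounds compressible one-sided surfaces of arbitrarily large genus, and nothing in your argument distinguishes $F_2$ from these. Sharing a boundary slope with the incompressible representative (even granting uniqueness up to isotopy, which is not part of \Cref{prop:boundingcurve} as stated) does not identify $F_i$ with it --- for that you would already need to know that $F_i$ is incompressible, which is exactly what is to be proved.

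The missing ingredient, and the way the paper closes the argument, is an \emph{independent} computation of the Euler characteristics of $F_2$ and $F_3$ from their explicit description in \Cref{sec:torus} as normal surfaces with one quadrilateral per tetrahedron: this gives $\chi(F_2) = -(m-1)/2$ and $\chi(F_3) = -(m-3)/2$ directly, with no reference to incompressibility. Since these values agree with the negative Euler characteristic that \Cref{prop:boundingcurve} forces on every incompressible one-sided surface with the same boundary, and since a compressible surface with that boundary would have strictly larger negative Euler characteristic (compressions strictly increase $\chi$, and eventually terminate in an incompressible surface realising $N(2p,q)$), the surfaces $F_2$ and $F_3$ must themselves be incompressible. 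In short, the correct logical order is the reverse of yours: compute $\chi(F_i)$ combinatorially first, then use \Cref{prop:boundingcurve} to deduce incompressibility, rather than using the proposition to deduce $\chi(F_i)$.
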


\begin{proof}
We have 
\[\gamma_i \ \simeq\ \langle \gamma_i, \meridian \rangle \longitude + \langle \gamma_i, \longitude \rangle \meridian. \]
As can be deduced from \Cref{fig:canonical}, this results in $\gamma_2 = (m+1) \longitude + \meridian$ and $\gamma_3 = (m-1) \longitude + \meridian$.

\Cref{prop:boundingcurve} applied to $\gamma_2$ and $\gamma_3$ shows that these curves bound incompressible surfaces of negative Euler characteristic $(m-1)/2$ and $(m-3)/2$ respectively. Now $F_2$ and $F_3$ have precisely this negative Euler characteristic. Hence they must be incompressible. 
\end{proof}

%%%%%%%%%%%%%%%%%

\section{The family of triangulations}
\label{sec:triangulations}

Suppose $k, n\geq 3$ odd.
Let $T^1 = T_k^1=T_k$ and $T^2 = T_n^2=T_n$ be triangulations of the solid torus from \Cref{sec:torus}. Denote their tetrahedra by $\Delta^i_j,$ $i=1,2.$ 
Glue the boundary of $T^1$ to the boundary of $T^2$ in the following way:
\begin{equation}
  \label{eq:interface}
\Delta^1_1(012)  \mapsto  \Delta^2_{1}(120); \,\ \Delta^1_k(013)_1 \mapsto \Delta^2_{1}(032); \,\ \Delta^1_1(023) \mapsto  \Delta^2_{n}(321); \,\ \Delta^1_k(123) \mapsto  \Delta^2_{n}(130).
\end{equation}
We denote the set the face pairings in \eqref{eq:interface} by $\Phi.$

This identifies all vertices from both boundary tori, and the link of this single vertex is a torus. The identification space of the resulting triangulation $\tri_{k,n}$ is an orientable pseudo-manifold having the single vertex as the only non-manifold point, and the complement of the vertex is denoted $\manifold_{k,n}.$ See \Cref{fig:gluing} for the two torus boundaries of $T^1$ and $T^2$. 

We refer to the image of the boundaries of $T^1$ and $T^2$ in $\tri_{k,n}$ as the {\em gluing interface}, see \Cref{fig:interface in link} for a drawing of the associated abstract $2$-complex. As observed, this has four triangles, one vertex, and we claim that there are four edges.
For the twenty edges involved in the identifications on the boundaries of the tori, we have identifications $\Delta_1^1(01)=\Delta_k^1(01)$, $\Delta_1^1(23)=\Delta_k^1(23)$, $\Delta_1^1(03)=\Delta_1^1(12)$ and $\Delta_k^1(03)=\Delta_k^1(12)$ coming from the gluings inside $T^1$; and similarly
$\Delta_1^2(01)=\Delta_n^2(01)$, $\Delta_1^2(23)=\Delta_n^2(23)$, $\Delta_1^2(03)=\Delta_1^2(12)$ and $\Delta_n^2(03)=\Delta_n^2(12)$ coming from the gluings inside $T^2$.  Combining this with the identifications of the boundaries yields four edge classes in the gluing interface:
\begin{align*}
e_1 = & \Delta_1^1(01)=\Delta_k^1(01) = \Delta_1^2(03)=\Delta_1^2(12), \\
e_2 = & \Delta_1^1(23)=\Delta_k^1(23) = \Delta_n^2(30)=\Delta_n^2(21), \\
e_3 = & \Delta_1^2(01)=\Delta_n^2(01) = \Delta_1^2(23)=\Delta_n^2(23) = \Delta_1^1(20)=\Delta_k^1(31), \\
e_4 = & \Delta_1^1(03)=\Delta_1^1(12) = \Delta_k^1(30)=\Delta_k^1(21) = \Delta_1^2(20)=\Delta_n^2(31).
\end{align*}

Denote the quadrilateral normal surfaces in the solid tori $T^i$ described in \Cref{sec:torus} by $F^i_j$, $i=1,2$, $j = 1,2,3$. Their edge weights $(w(e_1),w(e_2),w(e_3),w(e_4))$ in $\partial T^i$, $i=1,2$ are
\begin{itemize}
  \item $(1,1,0,1)$ for $F_2^1$ and $F_1^2$;
  \item $(0,0,1,1)$ for $F_1^1$ and $F_3^2$;
  \item $(1,1,1,0)$ for $F_3^1$ and $F_2^2$.
\end{itemize}

It follows that $\tri_{k,n}$ admits three normal surfaces 
$\surface_1 = F_2^1 \cup F_1^2$,
$\surface_2 = F_1^1 \cup F_3^2$, and 
$\surface_3 = F_3^1 \cup F_2^2$,
each consisting of a single quadrilateral per tetrahedron such that every normal quadrilateral type of $\tri_{k,n}$ occurs in exactly one of these surfaces.

Their respective Euler characteristics can be computed from \Cref{prop:boundingcurve}. For this, note that the boundary curves of the surfaces $F^i_j$ inside $T^i$, $i=1,2$, $j = 1,2,3$, are made up of four normal arcs and four midpoints of edges. 

In $\tri_{k,n}$, the surfaces $\surface_j$, $j=1,2,3$, still meet the gluing interface $T^1 \cap T^2$ in four normal arcs, but only three (resp.\thinspace two) midpoints of edges for $\surface_3$ and $\surface_1$ (resp.\thinspace $\surface_2$). This can be seen by looking at their edge weights for the four edges in the gluing interface.

Thus, the Euler characteristic of each surface pieces is the value given in \Cref{prop:boundingcurve} minus $1$ (resp.\thinspace minus $2$). Altogether we have 
\begin{align}
\chi(\surface_1) \ &= (\chi(F_2^1)-1) + (\chi (F_1^2)-1) + 1 &&= - \frac{k-1}{2} + 0 -1 &&= -\frac{k+1}{2}\\
  \chi(\surface_2) \ &= (\chi(F_1^1)-2) + (\chi (F_3^2)-2) + 2 &&= 0 - \frac{n-3}{2} - 2  &&= -\frac{n+1}{2}\\
  \chi(\surface_3) \ &= (\chi(F_3^1)-1) + (\chi (F_2^2)-1) + 1 &&= - \frac{k-3}{2} - \frac{n-1}{2} - 1  &&= - \frac{k+n-2}{2}  
\end{align}

Each of the $\surface_j$, $j=1,2,3$ is the canonical representative of one of the non-zero $\mathbb{Z}_2$--homology classes and we have 
\[
\chi(\surface_1) + \chi(\surface_2) + \chi(\surface_3) = -(k+n).
\]
In order to apply \Cref{thm:sumofnorms} to conclude that $\tri_{k,n}$ is minimal, we need to show that each $\manifold_{k,n}$ is hyperbolic (see \Cref{sec:manifolds}) and that each $\surface_j$ is taut (see \Cref{sec:mainresult}).

%%%%%%%%%%%%%%%%%

\section{Hyperbolicity and algebraic topology of $\manifold_{k,n}$}
\label{sec:manifolds}

We start by building a 3--cusped manifold using a variant of the construction of $\tri_{k,n}$ described above. 
For this, take a cone $\widehat{C}^1$ over $\partial T^1$ and a cone $\widehat{C}^2$ over $\partial T^2$, where $T^1 = T_k^1$ and $T^2 = T_n^2$ are the solid torus triangulations from \Cref{sec:triangulations} and $k, n\ge 3$ both odd.
Now we glue them together using the same face pairings $\Phi$ from \eqref{eq:interface} to obtain a triangulation $\tri$ with eight tetrahedra of a pseudo-manifold $\widehat{N}$ with three vertices having torus links. This is defined by the gluing table given in \Cref{fig:tableNk} and shown in \Cref{fig:ideal triangulation}. 

Using Regina \cite{regina} we can check that the complement $N = \widehat{N} \setminus \widehat{N}^{(0)}$ of the three vertices in $\widehat{N}$ is a 3--manifold homeomorphic to the complement of  the link $8^3_9$, and that this is a minimal triangulation of $N.$ We refer to $C^1 = \widehat{C}^1\setminus \widehat{N}^{(0)}$ and $C^2 = \widehat{C}^2\setminus \widehat{N}^{(0)}$ respectively as the \emph{(red) cusp 1} and the \emph{(blue) cusp 2} of $N.$ The red cusp is the symmetric cusp of the link complement, and the blue cusp therefore one of the non-symmetric cusps.

\begin{figure}[thb]
  \centerline{\includegraphics[width=\textwidth]{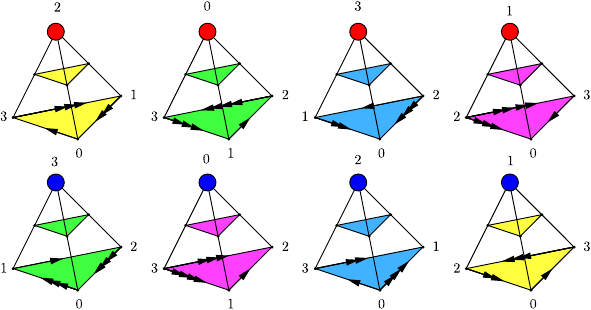}}
 \caption{The ideal triangulation of $N$, the complement of the link $8^3_9$. \label{fig:ideal triangulation}}
\end{figure}

\begin{figure}[thb]
  \centerline{\includegraphics[width=0.8\textwidth]{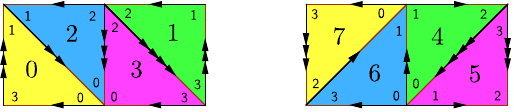}}
 \caption{The interface viewed from red and blue cusp\label{fig:interface in link}}
\end{figure}

  \begin{table}
   \begin{center}
    \begin{tabular}{r|rrrr}
      Tet & $(012)$ & $(013)$ & $(023)$ & $(123)$ \\
      \hline
$0$&$2 (013)$&${\bf 7 (023)}$&$1 (102)$&$1 (103)$\\
$1$&$0 (203)$&$0 (213)$&$3 (123)$&${\bf 4 (120)}$\\
$2$&${\bf 6 (130)}$&$0 (012)$&$3 (021)$&$3 (031)$\\
$3$&$2 (032)$&$2 (132)$&${\bf 5 (231)}$&$1 (023)$\\
      \hline
$4$&${\bf 1 (312)}$&$6 (012)$&$5 (130)$&$5 (120)$\\
$5$&$4 (312)$&$4 (302)$&$7 (123)$&${\bf 3 (302)}$\\
$6$&$4 (013)$&${\bf 2 (201)}$&$7 (013)$&$7 (012)$\\
$7$&$6 (123)$&$6 (023)$&${\bf 0 (013)}$&$5 (023)$\\
      \hline
    \end{tabular}
   \end{center}
   \caption{Gluing table for $3$-cusped manifold $N$, an ideal triangulation homeomorphic to the complement of $8^3_9$. Bold entries are gluings across the interface, all simplices are coherently oriented. \label{fig:tableNk}}
  \end{table}

It follows from the construction that the \emph{interface} between the red and blue cusps in $N$ is \emph{identical} to the interface between the solid tori in $\manifold_{k,n}.$ It is shown in \Cref{fig:interface in link}.
We now truncate the red and blue cusps along vertex linking surfaces. These surfaces are naturally combinatorially isomorphic with $\partial T^1$ and $\partial T^2$. 
The cusps are then filled with the solid tori $T^1_k$ and $T^2_n$ respectively, such that the gluing agrees with the natural combinatorial isomorphism (see 
\Cref{fig:filling}). With respect to the framing indicated in the figure, this results in the manifold
\[ N_{k,n}= N(\; \infty\; , (-k,1),(n+1,1)).\]
For the interested reader who wishes to use SnapPy to study these manifolds, we remark that 
\begin{equation}
\label{eq:snappy}
N_{k,n}=\text{Manifold('8\^{}3\_9')}(\; \infty\; , (n+1,-1), (k+1,1)).
\end{equation}
The \emph{Euclidean} equilateral triangles shown in \Cref{fig:filling} in fact give the Euclidean structures (up to scale) on the cusp cross sections with respect to the complete hyperbolic structure on $N.$ Here, each ideal tetrahedron has shape $i$ and each triangle therefore angles $\frac{\pi}{2}, \frac{\pi}{4},\frac{\pi}{4}$. The framing used in this paper is thus the natural geometric framing of the cusps.

\begin{figure}[thb]
  \centerline{\includegraphics[width=0.66\textwidth]{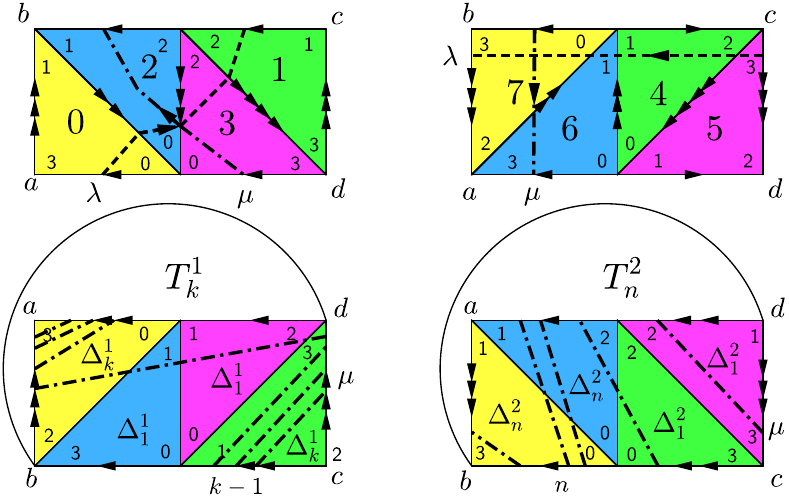} }
  \caption{Fillings of cusps of $\manifold$; indicated are the peripheral curves on the vertex links and the meridian curves on the solid tori. \label{fig:filling}}
\end{figure}

Casson observed that if an ideal triangulation admits an angle structure, then the manifold must admit a complete hyperbolic metric of finite volume (see~\cite[Theorem 10.2]{Rivin-combinatorital-2003}). Using this, computation with Regina~\cite{regina} confirms that $N_{k,n}$ is hyperbolic for all $17 \ge k, n \ge 3.$ The same result could also be obtained with SnapPy~\cite{SnapPy}.

To show that $N_{k,n}$ also has a complete hyperbolic metric of finite volume for all $k, n \ge 17,$ we note that due to Thurston's hyperbolisation theorem for Haken manifolds~\cite{Thurston-hyperbolic-1986, Otal-hyperbolization-1998, Kapovich-hyperbolic-2001}, it suffices to show that $N_{k,n}$ is irreducible, atoroidal, and not Seifert fibred. This conclusion is achieved by an application of Lackenby's combinatorial version of the Gromov-Thurston $2\pi$ theorem \cite[Theorem 4.9]{Lackenby-word-2000} (see also the sentence after its proof for the case when not all cusps are filled).
We refer to \cite{Lackenby-word-2000} for the definitions and statements required to understand the next paragraph.

The hypothesis of  \cite[Theorem 4.9]{Lackenby-word-2000} is achieved by applying \cite[Proposition 4.10]{Lackenby-word-2000}, which gives a criterion using angle structures. We use the angle structure on $N$ determined by the complete hyperbolic structure, where each triangle in the vertex link has angles $\frac{\pi}{2}, \frac{\pi}{4},\frac{\pi}{4}$. In particular, Lackenby's combinatorial length of any curve $\gamma$ on a vertex link is estimated from below by the shortest simplicial length $|\gamma|$ of a representative of $[\gamma]$ times $\frac{\pi}{8}.$ The theorem thus applies to any set of filling curves $\gamma$ with $|\gamma|\frac{\pi}{8} > 2 \pi,$ and hence $|\gamma| \ge 17.$ Examination of the vertex links gives the simple bounds
$|\meridian_1^{-k}\longitude_1| \ge k$ and $|\meridian_2^{n+1}\longitude_2| \ge n+1$. Therefore $k, n \ge 17$ are sufficient to ensure hyperbolicity.

Next, we observe that by construction the manifold $N_{k,n}$ is homotopy equivalent with $\manifold_{k,n},$ and that we may choose the homotopy to be the identity in a neighbourhood of the single cusp (corresponding to the vertex on the interface). Now take a sufficiently large compact core of each manifold, and double it along its boundary. Then we have two closed Haken 3--manifolds that are homotopy equivalent, and hence homeomorphic by Waldhausen~\cite{Waldhausen-irreducible-1968}. Since $N_{k,n}$ is hyperbolic and
homeomorphism preserves the JSJ decomposition, it follows that $N_{k,n}$ is homeomorphic with $\manifold_{k,n}.$ In particular, $\manifold_{k,n}$ is also hyperbolic.

It is clear from the description of the normal surfaces in the previous section that $\manifold_{k,n}=N_{k,n}$ has homology amenable to an application of \Cref{thm:sumofnorms} (\cite[Theorem 1]{Jaco-ideal-2020}).
This can be verified using a homology computation using the chosen peripheral framings. Using the triangulation, one computes
\[ \pi_1(N) = \langle \; a, b, c \; \mid \; abcbc^{-1}a^{-1}cb^{-1}c^{-1}b^{-1} , \; abc^{-1}ab^{-1}a^{-1}bca^{-1}b^{-1}\; \rangle\]
with the two peripheral subgroups of interest to us (in the geometric framing) generated by:
\[\meridian_1 = c^{-1}a, \quad \longitude_1 = b^{-1}a^{-1}bc, \qquad \meridian_2 = a, \quad \longitude_2 = bcbc^{-1}\]
This implies:
\[H_1(\manifold_{k,n}) \cong \langle \;a, b, c\; \mid \; (a^{-1}c)^{k+1}, \; (a^{(n+1)/2}b)^2 \;\rangle \cong \Z \oplus \Z_2 \oplus \Z_{k+1}\]
In particular, this highlights the requirement for both $n$ and $k$ to be odd.

%%%%%%%%%%%%%%%%%

\section{The norm of the homology classes of $\manifold_{k,n}$}
\label{sec:mainresult}

In this section we determine the norm of the non-trivial classes in $H_2 (\manifold_{k,n},\mathbb{Z}_2)$. We first provide an overview. Our proofs of correctness of the calculation rely on technical results by Jaco and Sedgwick~\cite{Jaco-decision-2003} for normal surfaces with boundary on a single 2--triangle torus and Bachman, Derby-Talbot and Sedgwick~\cite{bachman16complicatedHeegaardSplittings} for normal surfaces with boundary on two 2--triangle tori.

We consider a triangulation $\tri'$ of the 3--manifold obtained by removing open neighbourhoods of the blue and red cusps of the complement of $8^3_9$ (\Cref{subsec_hom:triangulation}). This has one cusp and two boundary components, $\partial_1$ (the boundary of the blue cusp) and $\partial_2$ (the boundary of the red cusp). The interior of the identification space $\overline{\tri}$ of $\tri'$ is homeomorphic to the complement of $8^3_9.$ 
Using the natural geometric framing on $\partial_1$ and $\partial_2$ and the surgery description of $\manifold_{k,n}$, we determine how to complete $\tri'$ to a triangulation $\tri'_{k,n}$ of $\manifold_{k,n}$ by gluing suitable layered solid tori to $\partial_1$ and $\partial_2$ (\Cref{subsec_hom:framing}).

Each non-trivial class $\class\in H_2(\manifold_{k,n},\mathbb{Z}_2)$ associates with each edge in $\partial_1$ and $\partial_2$ a parity, which we call its \textbf{boundary pattern} (\Cref{subsec_hom:patterns}). This boundary pattern encodes the parities of the number of points in the intersection of the edges with a surface that is transverse to the triangulation and represents the class $\class .$ There is a taut normal surface $S$ in $\tri'_{k,n}$  that represents the class $\class$ and its intersection with each of $\partial_1$ and $\partial_2$ consists of at most one curve, and these curves of intersection are essential ( \Cref{subsec_hom:surfaces}).

The norm of $\class$ is the maximum Euler characteristic of any normal surface in $\tri'_{k,n}$ representing $\class .$ 
The computation of the norm of $\class$ thus splits into a computation of all normal surfaces of $\tri'$ that have the correct boundary pattern (\Cref{subsec_hom:surfaces}) and determining how these normal surfaces extend into the layered solid tori attached to $\tri'$ along $\partial_1$ and $\partial_2$ (\Cref{subsec_hom:norm}). Normal surfaces in these layered solid tori are well-known (\Cref{subsec_hom:LST}). Moreover, the Euler characteristics of the latter surfaces are determined by their boundary slopes via the Bredon-Wood formula (see \Cref{prop:boundingcurve} or \cite[Corollary 2.2]{Frohman-one-sided-1986}). The conclusion of these computations in \Cref{subsec_hom:norm} is that:
\[
||\;\class_1\;|| = \frac{k+1}{2}, \qquad
||\;\class_2\;|| = \frac{n+1}{2}, \qquad 
||\;\class_3\;|| = \frac{n+k-2}{2} 
\]
In particular, this confirms that the surfaces $\surface_1, \surface_2$ and $\surface_3$ described in \Cref{sec:triangulations} are taut.
We now supply the missing details.

%%%%%%%%%%%%%%%%%

\subsection{A triangulation with two boundary components and one ideal vertex} 
\label{subsec_hom:triangulation}

Starting with the ideal $3$-cusped triangulation $N$ of the complement of $8^3_9$, see \Cref{fig:ideal triangulation} and \Cref{fig:tableNk}, we cut along the vertex links of the blue and red vertex respectively to obtain a $17$-tetrahedra triangulation with two boundary components and one ideal vertex. By construction, the interior of the identification space is homeomorphic with the complement of $8^3_9.$
We denote this triangulation by $\tri'$. Its isomorphism signature is

\centerline{\texttt{rfLLHMzLPMwQcddghghjnklomqopqrwgrrgfxrvqdabxs}}

and its gluing table is given in \Cref{fig:tableT}.

  \begin{table}[h]
   \begin{center}
    \begin{tabular}{r|rrrr}
      Tet & $(012)$ & $(013)$ & $(023)$ & $(123)$ \\
      \hline
$\Delta_{0}$&$3 (012)$&$\partial_1$&$2 (023)$&$1 (123)$\\
$\Delta_{1}$&$5 (012)$&$3 (230)$&$4 (023)$&$0 (123)$\\
$\Delta_{2}$&$7 (012)$&$3 (321)$&$0 (023)$&$6 (123)$\\
$\Delta_{3}$&$0 (012)$&$\partial_1$&$1 (301)$&$2 (310)$\\
$\Delta_{4}$&$8 (012)$&$7 (230)$&$1 (023)$&$6 (023)$\\
$\Delta_{5}$&$1 (012)$&$7 (103)$&$9 (023)$&$6 (310)$\\
$\Delta_{6}$&$10 (012)$&$5 (321)$&$4 (123)$&$2 (123)$\\
$\Delta_{7}$&$2 (012)$&$5 (103)$&$4 (301)$&$11 (123)$\\
$\Delta_{8}$&$4 (012)$&$13 (013)$&$9 (021)$&$12 (123)$\\
$\Delta_{9}$&$8 (032)$&$10 (230)$&$5 (023)$&$13 (210)$\\
$\Delta_{10}$&$6 (012)$&$ 14 (013)$&$9 (301)$&$11 (120)$\\
$\Delta_{11}$&$10 (312)$&$12 (021)$&$14 (201)$&$7 (123)$\\
$\Delta_{12}$&$11 (031)$&$16 (013)$&$15 (023)$&$8 (123)$\\
$\Delta_{13}$&$9 (321)$&$8 (013)$&$14 (032)$&$16 (123)$\\
$\Delta_{14}$&$11 (230)$&$10 (013)$&$13 (032)$&$15 (210)$\\
$\Delta_{15}$&$14 (321)$&$\partial_2$&$12 (023)$&$16 (012)$\\
$\Delta_{16}$&$15 (123)$&$12 (013)$&$\partial_2$&$13 (123)$\\
  \hline
    \end{tabular}
   \end{center}
   \caption{Gluing table for triangulation $\tri'$ of the complement of $8^3_9$ with two real boundary components and one ideal vertex. \label{fig:tableT}}
  \end{table}

In particular, the two boundary components $\partial_1$ (bounding a neighbourhood of the blue cusp in \Cref{fig:ideal triangulation} -- one of the non-symmetric components of $8^3_9$) and $\partial_2$ (bounding a neighbourhood of the red cusp in \Cref{fig:ideal triangulation} -- the symmetric component of $8^3_9$) are two $1$-vertex tori made up of triangles $0(013)$ and $3(013)$, and $15(013)$ and $16(023)$ respectively. We have edge classes

\begin{align*}
e_0 = & \Delta_3(01)=\Delta_0(01), \\
e_2 = & \Delta_0(03)=\Delta_2(03) = \Delta_3(31), \\
e_4 = & \Delta_3(30)=\Delta_1(13) = \Delta_0(13)
\end{align*}

for $\partial_1$ and 

\begin{align*}
e_{18} = & \Delta_{16}(03)=\Delta_{12}(03)=\Delta_{15}(03), \\
e_{19} = & \Delta_{15}(01)=\Delta_{14}(32) = \Delta_{13}(23)=\Delta_{16}(23), \\
e_{20} = & \Delta_{16}(02)=\Delta_{15}(13)
\end{align*}
for $\partial_2$.

%%%%%%%%%%%%%%%%%

\subsection{Boundary patterns} 
\label{subsec_hom:patterns}

By construction, a triangulation $\tri'_{k,n}$ of $\manifold_{k,n}$ can be obtained from $\tri'$ by gluing two suitable layered solid tori to $\partial_1$ and $\partial_2$. Let $\class_1,\class_2,\class_3 \in H_2(\manifold_{k,n},\mathbb{Z}_2)$ be the three non-zero $\mathbb{Z}_2$-- homology classes corresponding to normal surfaces $\surface_1$, $\surface_2$, and $\surface_3$ in $\tri_{k,n}$ (see \Cref{sec:triangulations}).

The fundamental group of the complement of $8^3_9$ is generated by three meridians. We choose the base point for the fundamental group of $\overline{\tri}$ at the vertex of $\partial_1.$ 
Since $\mathbb{Z}_2$ is abelian, 
a homomorphism $\pi_1(\overline{\tri}) \to \mathbb{Z}_2$ uniquely associates labels in $\mathbb{Z}_2$ to the edges of $\partial_1$ and $\partial_2.$ 
Via the inclusion $\overline{\tri} \subset \overline{\tri'_{k,n}}$,
a class in $H_2(\manifold_{k,n},\mathbb{Z}_2)$ determines a homomorphism $\pi_1(\overline{\tri}) \to \mathbb{Z}_2$ mapping the meridian of the cusp of $\overline{\tri}$ to zero.

Conversely, any homomorphism $\pi_1(\overline{\tri}) \to \mathbb{Z}_2$ mapping the meridian of the cusp of $\overline{\tri}$ to zero uniquely determines a class in $H_2(\manifold_{k,n},\mathbb{Z}_2)$ due to the surgery description. 

Hence each $\class_i$ associates labels in $\mathbb{Z}_2$ to the edges in $\partial_1$ and $\partial_2$ (and in turn is uniquely determined by these).
We call these labels the \textbf{boundary pattern} of $\class_i.$ 

%%%%%%%%%%%%%%%%%

\subsection{The normal surfaces of $\tri'$} 
\label{subsec_hom:surfaces}

Let $U_i\subset \manifold_{k,n}$, $[U_i]=\class_i$, $i=1,2,3$ be a taut representative of $\class_i$ in $\manifold_{k,n}$. 
Since $U_i$ is incompressible and $\manifold_{k,n}$ is irreducible, $U_i$ is isotopic to a normal surface. Hence we may assume that
$U_i$ is normal. We further assume that amongst all taut normal surfaces representing $\class_i$, $U_i$ minimises the total edge weight on $\partial_1$ and $\partial_2.$ 

We claim that this implies that 
$U_1$ meets $\partial_1$ in one essential curve and is disjoint from $\partial_2$;
$U_2$ meets $\partial_2$ in one essential curve and is disjoint from $\partial_1$; and 
$U_3$ meets both $\partial_1$ and $\partial_2$ in one essential curve.
Indeed, if there are trivial curves in the intersection with $\partial_1$ or $\partial_2$, then one may perform isotopies across balls to reduce the total weight on $\partial_1$ and $\partial_2.$ The resulting surface may not be normal, but normalises to a normal surface that is still taut and has smaller total edge weight on $\partial_1$ and $\partial_2$ than the initial surface. Similarly, if there are two parallel essential curves in the intersection with $\partial_1$ or $\partial_2$, then one may perform an annular compression reducing the total edge weight on $\partial_1$ and $\partial_2.$ The resulting surface has the same Euler characteristics, and hence still has a taut component that therefore normalises to a normal surfaces of smaller total weight on $\partial_1$ and $\partial_2$ than the initial surface.
It follows that there is at most one essential curve in the intersection with each $\partial_1$ and $\partial_2,$ and the claimed numbers now follow from the boundary pattern.

Let $\normmin_i = U_i \cap \tri'$. We now show that we may assume that $\normmin_1$ and $\normmin_2$ are fundamental surfaces in $\tri.$

Since $\normmin_1$ and $\normmin_2$ have one essential curve on a $1$--vertex torus boundary component, it follows from \cite[Proposition 3.7]{Jaco-decision-2003}, that there exists a fundamental normal surface with the same boundary curve and with equal or larger Euler characteristic. Using Regina \cite{regina} we find that $\tri'$ has $900$ fundamental surfaces in standard coordinates.

First, note that neither $\tri'$ nor the filling tori contain closed normal surfaces of positive Euler characteristic.
Hence, to find the norm of $\class_1$ (resp.\thinspace $\class_2$) it is sufficient to take the minimum over all fundamental normal surfaces $F$ in $\tri'$ with boundary a single essential curve in $\partial_1$ (resp.\thinspace $\partial_2$) of the sum of
\begin{enumerate}
\item[(a)] the negative Euler characteristic of $F$, and 
\item[(b)] the negative Euler characteristic of the incompressible surface $F'$ in the filling torus with $\partial F' = \partial F$.
\end{enumerate}

There are only a few candidate surfaces $F$ to consider.

There are $10$ fundamental surfaces matching the boundary pattern of $\class_1$ and having intersection with $\partial_1$ one essential curve and empty intersection with $\partial_2.$ There are two distinct boundary curves amongst these surfaces, and it suffices to pick a fundamental surface of maximal Euler characteristic for each of these curves.

\begin{figure}[htb]
\centerline{\includegraphics[height=3cm]{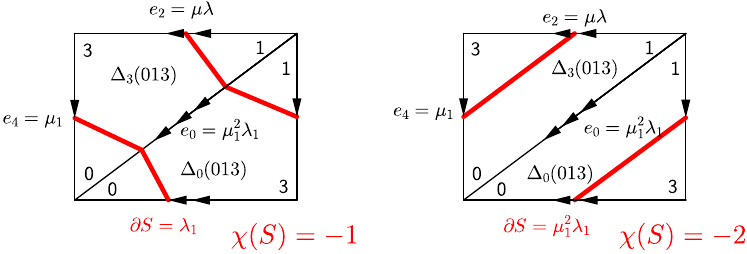}}
\caption{Boundary curves and Euler characteristics of candidate surfaces for $\normmin_1$. \label{fig:c3}}
\end{figure}

Similarly, there are $25$ fundamental surfaces matching the boundary pattern of $\class_2$ and having intersection with $\partial_2$ one essential curve and empty intersection with $\partial_1.$ There are three distinct boundary curves amongst these surfaces, and it suffices to pick a fundamental surface of maximal Euler characteristic for each of these curves; see \Cref{fig:c1}.

\begin{figure}[htb]
\centerline{\includegraphics[height=3cm]{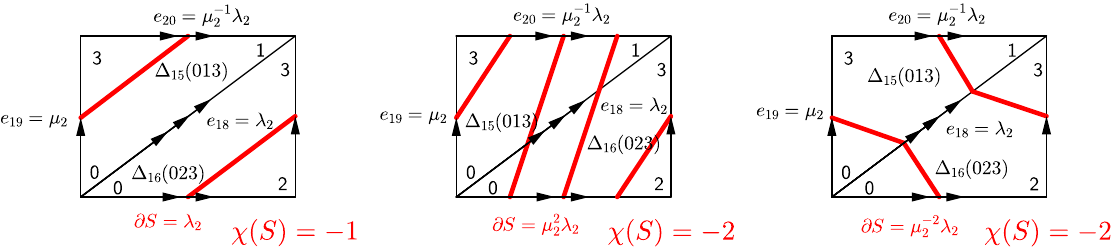}}
\caption{Boundary curves and Euler characteristics of candidate surfaces for $\normmin_2$. \label{fig:c1}}
\end{figure}

The second term is determined by $\partial F' = \partial F$ via the framing and the Bredon-Wood formula, see \Cref{prop:boundingcurve} or \cite[Corollary 2.2]{Frohman-one-sided-1986}. However, we take an equivalent approach by analysing the space of all normal surfaces of the layered solid torus realising the filling of the respective cusp.

We show in \Cref{subsec_hom:norm} that this forces
\[
||\;\class_1\;|| = \frac{k+1}{2} \qquad \text{and}\qquad
||\;\class_2\;|| = \frac{n+1}{2} 
\]
We use these norms to determine the norm of $\class_3.$
The surface $\normmin_3$ has one essential boundary curve in each of $\partial_1$ and $\partial_2$. Since $\normmin_3\cap \partial_j$ is a single essential curve, its Haken summands cannot have any trivial boundary components.
First suppose a Haken sum of fundamental surfaces in $\tri'$ giving $\normmin_3$ contains a surface that is disjoint from one of $\partial_1$ or $\partial_2$. It then follows from \cite[Lemma 2.10]{bachman16complicatedHeegaardSplittings} that there must be exactly one summand 
$H_1$ with $H_1 \cap \partial_1 =  \normmin_3 \cap \partial_1$ and $H_1 \cap \partial_2 = \emptyset$; and 
exactly one summand 
$H_2$ with $H_2 \cap \partial_2 = \normmin_3 \cap \partial_2$ and $H_2 \cap \partial_1 =\emptyset.$ 
Since $\normmin_3$ is the intersection with $\tri'$ of the normal surface $U_3$ in $\tri'_{k,n},$ it follows that each $H_1$ and $H_2$ extends to a normal surface $V_1$ and $V_2$ respectively in $\tri'_{k,n}$ via a subsurface of $U_2.$
The boundary pattern implies that $[V_1] = \class_1$ and  $[V_2] = \class_2$, and since $U_3$ is taut, we have $||\;\class_3\;|| \ge ||\;\class_1\;|| + ||\;\class_2\;||.$
But we already know from the existence of $\surface_3$ from \Cref{sec:triangulations} that 
\[ 
||\;\class_3\;|| \le \frac{n+k-2}{2} < \frac{k+1}{2} + \frac{n+1}{2} = ||\;\class_1\;|| + ||\;\class_2\;||
\]
Hence every Haken summand $H$ with non-empty boundary satisfies $H \cap \partial_1 = \normmin_3 \cap \partial_1$ and $H \cap \partial_2 = \normmin_3 \cap \partial_2.$ Since there is no closed normal surface with positive Euler characteristic, we may assume that every Haken summand in a Haken sum giving $\normmin_3$ meets each boundary component of $\tri'$ in non-trivial essential curves.

After sorting out duplicates and non-maximal Euler characteristic examples, there are $7$ compatibility classes of normal surfaces in $\tri'$ representing $\class_3.$ In particular, the first class can be reduced to a single fundamental surface, and each of the remaining six classes can be reduced to Haken sums of two fundamental surfaces, i.e.\thinspace Haken sums of the form $w_1H_1 + w_2H_2$. For all pairs $H_1,$ $H_2$ and their possible Haken sums $w_1H_1 + w_2H_2$, we list in \Cref{fig:compcl}
their edge weights on $\partial_1$ and $\partial_2$, their Euler characteristics, and weights $w_1$ and $w_2.$

\begin{table}
\begin{center}
\begin{tabular}{l||l|l|l||l|l|l||l|l}
$\#$&$w(e_0)$&$w(e_2)$&$w(e_4)$&$w(e_{18})$&$w(e_{19})$&$w(e_{20})$&$\chi$&weight \\
\hline
\hline
$1$&$2$&$1$&$1$&$0$&$1$&$1$&$-2$&$1$\\
\hline
\hline
$2$&$2$&$0$&$2$&$2$&$1$&$1$&$-2$&$2k+1$\\
&$2$&$1$&$3$&$2$&$2$&$0$&$-3$&$2l+1$\\
\hline
&$4(k+l+1)$&$2l+1$&$4k+6l+5$&$4(k+l+1)$&$2k+4l+3$&$2k+1$&$-4k-6l-5$\\
%[[2, 0, 2], [2, 1, 1], -2]
%[[2, 1, 3], [2, 2, 0], -3]
\hline
\hline
$3$&$2$&$0$&$2$&$2$&$1$&$1$&$-2$&$2k+1$\\
&$2$&$1$&$1$&$2$&$0$&$2$&$-1$&$2l+1$\\
\hline

&$4(k+l+1)$&$2l+1$&$4k+2l+3$&	$4(k+l+1)$&$2k+1$&$2k+4l+3$&	$-4k-2l-3$\\
%[[2, 0, 2], [2, 1, 1], -2]
%[[2, 1, 1], [2, 0, 2], -1]
\hline
\hline
$4$&$0$&$1$&$1$&$0$&$1$&$1$&$-1$&$2k+1$\\
&$2$&$2$&$0$&$2$&$1$&$3$&$-2$&$2l$\\
\hline

&$4l$&$2k+4l+1$&$2k+1$&	$4l$&$2k+2l+1$&$2k+6l+1$&	$-2k-4l-1 $\\
%[[0, 1, 1], [0, 1, 1], -1]
%[[2, 2, 0], [2, 1, 3], -2]
\hline
\hline
$5$&$2$&$1$&$1$&$2$&$0$&$2$&$-1$&$2k+1$\\
&$2$&$2$&$0$&$2$&$1$&$3$&$-2$&$2l+1$\\
\hline

&$4(k+l+1)$&$2k+4l+3$&$2k+1$&	$4(k+l+1)$&$2l+1$&$4k+6l+5$&	$-2k-4l-3 $\\
%[[2, 1, 1], [2, 0, 2], -1]
%[[2, 2, 0], [2, 1, 3], -2]
\hline
\hline
$6$&$0$&$1$&$1$&$0$&$1$&$1$&$-1$&$2k+1$\\
&$2$&$1$&$3$&$2$&$2$&$0$&$-3$&$2l$\\
\hline

&$4l$&$2k+2l+1$&$2k+6l+1$&	$4l$&$2k+4l+1$&$2k+1$&	$-2k-6l-1$\\
%[[0, 1, 1], [0, 1, 1], -1]
%[[2, 1, 3], [2, 2, 0], -3]
\hline
\hline
$7$&$0$&$1$&$1$&$0$&$1$&$1$&$-1$&$2k+1$\\
&$2$&$3$&$1$&$2$&$2$&$4$&$-3$&$2l$\\
\hline

&$4l$&$2k+6l+1$&$2k+2l+1$&	$4l$&$2k+4l+1$&$2k+8l+1$&	$-2k-6l-1$\\
%[[0, 1, 1], [0, 1, 1], -1]
%[[2, 3, 1], [2, 2, 4], -3]

\hline
\end{tabular}
\end{center}
\caption{Compatibility classes of candidates for a norm minimiser for $\class_3$. 
\label{fig:compcl}
}
\end{table}

%%%%%%%%%%%%%%%%%%%

\subsection{Some basic facts on layered solid tori} 
\label{subsec_hom:LST}

Layered solid tori denote a two parameter family $\lst(j,k)$ of $1$-vertex triangulations of the solid torus, see \cite{Jaco-layered-2006} for details. 
Given $\lst(j,k)$ ($j$ and $k$ co-prime), the boundary of the meridian disk has edge weights $j$, $k$, and $j+k$ on the three boundary edges.

In this section we are only interested in $\lst(0,1)$, and the family $\lst(1,m)$, $m \geq 2$.
The layered solid torus $\lst(0,1)$ has three tetrahedra. One of its edges is parallel to the boundary of its meridian disk (the unique edge with edge weight $0$). We call this edge the \textbf{meridional edge} of the layered solid torus.
The layered solid torus $\lst(1,m)$ is obtained inductively from the $1$-tetrahedron layered solid torus $\lst(1,2)$ by layering a tetrahedron on the edge of edge weight $m$ of $\lst(1,m-1).$ In particular, for $m \ge 2$ there is a unique boundary edge with edge weight $1$, and we call this the \textbf{longitudinal edge}.

It follows from \cite[Theorem 5.3]{Jaco-layered-2006} that the only normal surfaces in $\lst(1,m)$ with connected, essential boundary are the meridian disk (with edge weights $1$, $m$, and $m+1$ in the boundary, case (3) in the theorem), and one-sided incompressible normal surfaces with boundary slope what is called the slope of an even ordered edge in $\lst(1,m)$ (this is an edge in $\lst(1,m)$ that represents an even multiple of a generator of the fundamental group of the torus, case (6) in the theorem).

For $\lst(1,m)$ the slopes of even ordered edges have edge weights $1$, $m-2$, and $m-1$ (a M\"obius strip), $1$, $m-4$, and $m-3$ (a surface of Euler characteristic $-1$), all the way down to $1$, $1$, and $0$ for $m$ odd (a surface of Euler characteristic $\frac{1-m}{2}$) and all the way down to $1$, $0$, and $1$ for $m$ even (a surface of Euler characteristic $\frac{2-m}{2}$).
In particular, we have the following:

\begin{corollary}[Theorem 5.3 from \cite{Jaco-layered-2006}]
  \label{cor:lst}
 Let $S$ be a normal surface in $\lst(1,m)$, $m \geq 2$, with connected essential boundary. Then $S$ has edge weight $1$ on the longitudinal edge of $\lst(1,m).$
\end{corollary}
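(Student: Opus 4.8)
The plan is to read the statement off the classification of normal surfaces with connected essential boundary in $\lst(1,m)$ recorded above, which is \cite[Theorem 5.3]{Jaco-layered-2006}. By that classification, such a surface $S$ is either the meridian disk or a one-sided incompressible surface whose boundary slope is the slope of an even ordered edge of $\lst(1,m)$. It therefore suffices to inspect the boundary edge weights in each case and to check that the weight on the longitudinal edge is always $1$.

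First I would dispose of the meridian disk: its boundary edge weights on the three boundary edges are $1$, $m$, and $m+1$, and the longitudinal edge is by definition the unique boundary edge on which the meridian disk has weight $1$ (unique since $1<m<m+1$ for $m\ge 2$). Hence the meridian disk meets the longitudinal edge exactly once. For the one-sided surfaces, the discussion above records the realisable edge-weight triples $(1,m-2,m-1)$, $(1,m-4,m-3)$, and so on, down to $(1,1,0)$ for $m$ odd and $(1,0,1)$ for $m$ even. In every one of these triples the longitudinal edge, listed first, again carries weight $1$.

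The only step that genuinely needs care is confirming that this weight-$1$ entry really does sit on the \emph{longitudinal} edge and not on one of the other two boundary edges. I would establish this by tracking the three boundary edges consistently through the inductive construction of $\lst(1,m)$ from $\lst(1,2)$: the longitudinal edge is the weight-$1$ edge of the meridian disk; every layering is performed on the current weight-$m$ edge and so leaves the longitudinal edge in the boundary unchanged; and at each stage the relevant even ordered edge is seen to meet the longitudinal edge in a single point. With the edges labelled consistently in this way, the first entry of every triple is by construction the intersection number with the longitudinal edge, which settles both the $m$ even and the $m$ odd endpoints.

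The main obstacle is that all of the substantive content is imported from \cite[Theorem 5.3]{Jaco-layered-2006}: once that classification and its edge-weight data are available, the remaining work is only the bookkeeping of identifying the distinguished weight-$1$ entry with the fixed longitudinal edge. I would not attempt to reprove Theorem 5.3; instead I would present the edge-tracking carefully enough that this identification is unambiguous, after which the two cases combine to give the corollary.
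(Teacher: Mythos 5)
Your proposal is correct and takes essentially the same approach as the paper: there the corollary is presented as an immediate consequence of the classification in \cite[Theorem 5.3]{Jaco-layered-2006}, splitting into exactly your two cases --- the meridian disk with boundary weights $1$, $m$, $m+1$, and the one-sided surfaces with boundary the slope of an even ordered edge, with weight triples $(1,m-2,m-1)$ down to $(1,1,0)$ for $m$ odd or $(1,0,1)$ for $m$ even --- each carrying weight $1$ on the longitudinal edge. Your additional bookkeeping step (tracking the longitudinal edge through the layerings to confirm it carries the weight-$1$ entry) is left implicit in the paper's listing but is the same argument, not a different route.
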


If the layered solid torus triangulation is generated using the standard Regina~\cite{regina} function, then the surface $S$ in \Cref{cor:lst} has edge weight $1$ on the edge $e_2 = \Delta_0 (03) = \Delta_0 (21)$.

%%%%%%%%%%%%%%%%%

\subsection{Determining the natural geometric framing on $\partial_1$ and $\partial_2$} 
\label{subsec_hom:framing}

Just as in \Cref{sec:manifolds}, we denote the geometric framing of $\partial_i$ by $(\meridian_i,\longitude_i)$, $i=1,2$.

Gluing $\lst(0,1)$ to $\partial_2$ with the meridian edge along $e_{19}$ produces a manifold with fundamental group free of rank $2$ and hence the $2$-component unlink. Hence, $e_{19}$ runs parallel to the meridian $\meridian_2$ in the geometric framing of $\partial_2$. Gluing in the same layered solid torus along the other two edges demonstrates that $e_{18}$ runs parallel to the geometric longitude $\longitude_2$ and that $e_{20}$ runs parallel to $\meridian^{-1}_2\longitude_2$.

Gluing $\lst(0,1)$ with the meridian edge onto the edges $e_4$ and $e_2$ yields the fundamental group of the link complement of the remaining two link components (a double twisted Hopf link). Gluing the meridian edge onto $e_0$ shows that this edge runs parallel to $\meridian_1^{2}\longitude_1$ in the geometric framing. This implies that $e_4$ runs parallel to $\meridian_1^{-1}$ and $e_2$ runs parallel to $\meridian_1\longitude_1$. This computation is confirmed by noting that the homological longitude has edge weight $2$ on $e_0$ and edge weight $1$ on $e_2$ and $e_4$ (as observed by an orientable surface with this boundary curve, see \Cref{fig:c3} on the left) and the fact that this homological longitude coincides with the geometric longitude.

The above computations were verified using SnapPy \cite{SnapPy} and SnapPy in sage for the homological longitude. See \Cref{fig:c1,fig:c3} for pictures of $\partial_1$ and $\partial_2$ with their respective framings.

The information about the natural geometric framing on $\partial_1$ and $\partial_2$ combined with  \Cref{eq:snappy} determine which layered solid tori to glue to $\partial_1$ and $\partial_2$ to recover $\manifold_{k,n}$. 

In detail, $\lst(1,k-1)$ must be glued to $\partial_1$ with the edge weight-$1$ boundary edge of the layered solid torus glued to $e_2$. Similarly, $\lst(1,n)$ must be glued to $\partial_2$ with the edge weight-$1$ boundary edge of the layered solid torus glued to edge $e_{19}$. See \Cref{fig:gluing} for a detailed picture of the gluings.

\begin{figure}[htb]
\centerline{\includegraphics[height=10cm]{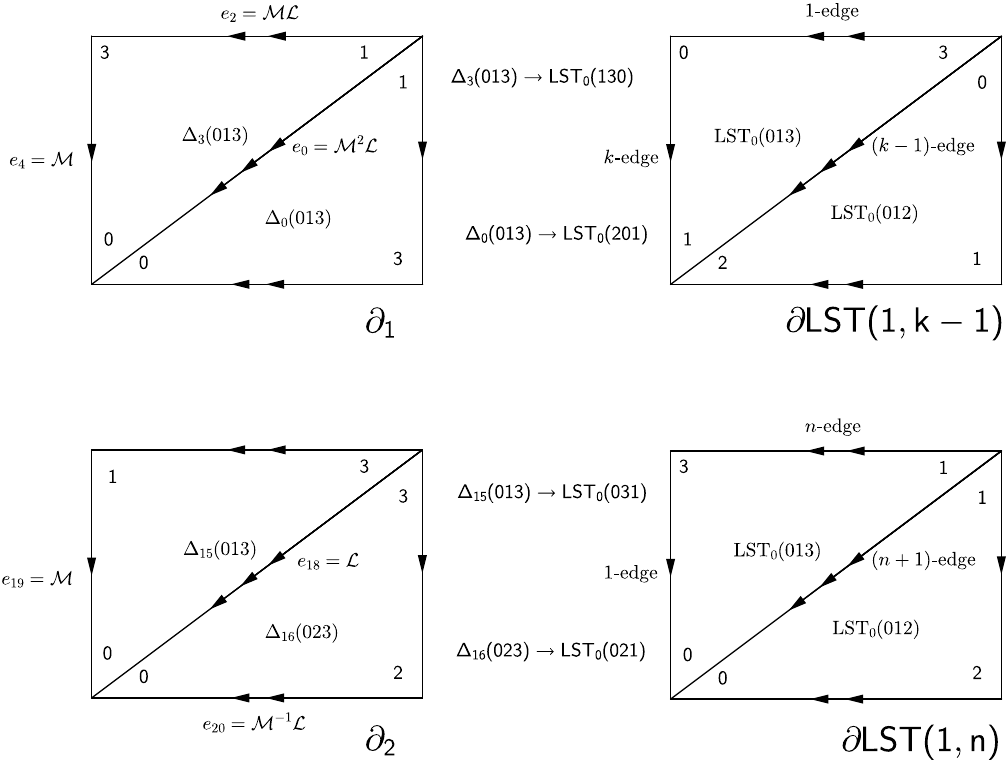}}
\caption{Filling $\tri'$ with suitable layered solid tori. \label{fig:gluing}}
\end{figure}

%%%%%%%%%%%%%%%%%

\subsection{Euler characteristic of the completion of $\normmin_i$, $1\leq i \leq 3$.} 
\label{subsec_hom:norm}

We are now in a position to determine the norms of $\class_i$, $ 1\leq i \leq 3$.

The triple $(a,b,c)$ denotes the (parities of the) edge weights of $e_0$, $e_2$, and $e_4$ in that order for $\partial_1$ and $(d,e,f)$ denotes the (parities of the) edge weights of $e_{18}$, $e_{19}$, and $e_{20}$ in that order for $\partial_2$.

The boundary pattern of $\class_1$ is $(0,1,1)$ with respect to the three edges on $\partial_1$ and  $(0,0,0)$ with respect to $\partial_2.$ These parities are realised by the following edge weights of the fundamental surfaces. 
We have $(2,1,1)$ ($\chi = -1$), and $(0,1,1)$ ($\chi = -2$). The former cannot be continued into the layered solid torus, the latter continues along the canonical surface of Euler characteristic $\frac{-k+3}{2}$. In total, this yields a surface of Euler characteristic $\frac{-k+3}{2} - 2 = \frac{-k-1}{2}$, hence the norm of $\class_1$ is $\frac{k+1}{2}$.  

\medskip

The boundary pattern of $\class_2$ is $(0,1,1)$ with respect to the three edges on $\partial_2$ and  $(0,0,0)$ with respect to $\partial_1.$ These parities are realised by the following edge weights of the fundamental surfaces. We have $(0,1,1)$ ($\chi = -1$), $(2,1,3)$ ($\chi = -2$), and $(2,1,1)$ ($\chi = -2$). The first can be completed with canonical surface of Euler characteristic $\frac{-n+1}{2}$. In total, this produces a surface representing $\class_2$ with Euler characteristic $\frac{-n+1}{2} - 1 = \frac{-n-1}{2}$. The second cannot be continued into the layered solid torus. The third bounds a normal surface in the layered solid torus obtained from the canonical normal surface by one boundary compression. It is hence of Euler characteristic $\frac{-n+3}{2}$ yielding, again, a normal surface of total Euler characteristic $\frac{-n+3}{2} - 2 = \frac{-n-1}{2}$. Hence, both the first and the third surfaces are taut, and we thus have that the norm of $\class_2$ is $\frac{n+1}{2}$.

\medskip

As shown in \Cref{subsec_hom:surfaces}, having computed the claimed norms of $\class_1$ and $\class_2,$ we may now restrict ourselves to normal surfaces with the boundary pattern of $\class_3.$
Since $e_2$ and $e_{19}$ are both glued to edge weight $1$-edges of layered solid tori of type $\lst(1,m)$, it follows from \Cref{cor:lst} that a necessary condition for a normal surface in $\tri'$ to extend into the layered solid tori is that it has  edge weight $1$ on $e_2$ and $e_{19}.$

Going through \Cref{fig:compcl}, compatibility class by compatibility class, this leaves us with the following cases:
\begin{description}
  \item[Class 1] Edge weights $1$ for both $e_2$ and $e_{19}$, but cannot be extended into layered solid torus at $\partial_1$ (no normal surface in $\lst(1,k-1)$ has edge weights $(2,1,1)$).
  \item[Class 2] Has $w(e_{19}) > 1$ for all of its members and can thus be disregarded.
  \item[Class 3] Has valid edge weights for $k=l=0$. This yields a surface with Euler characteristic $-3$ and edge weights $(4,1,3)$ on both $\partial_1$ and $\partial_2$. While this surface extends into $\lst(1,n)$ at $\partial_2$, it does not extended into $\lst(1,k-1)$ at $\partial_1$.
  \item[Class 4] Has valid edge weights for $k=l=0$. This yields a surface with Euler characteristic $-1$ and edge weights $(0,1,1)$ on both $\partial_1$ and $\partial_2$. This surface extends via the canonical $0/1$-surface into both $\lst(1,n)$ and $\lst(1,k-1)$ and produces a closed $1$-sided surface of Euler characteristic
  \[-1 + \frac{1-n}{2} +  \frac{3-k}{2} = - \frac{n+k-2}{2}. \]
  \item[Class 5] Has $w(e_{2}) > 1$ for all of its members and can thus be disregarded.
  \item[Class 6] Has valid edge weights for $k=l=0$. Gives same Euler characteristic and edge weights as Class 4.
  \item[Class 7] Has valid edge weights for $k=l=0$. Gives same Euler characteristic and edge weights as Class 4.
\end{description}

Altogether, it follows that the norm of $\class_3$ equals $\frac{n+k-2}{2}$.

%%%%%%%%%%%%%%%%%

\section{Generalisations}
\label{sec:constructions}

As mentioned in the introduction, the triangulations $\tri_{k,n}$ can be generalised to further conjecturally minimal triangulations. In this section we present one such family that was again found using experimentation with the census.

Consider the triangulation $\mathcal{U}_{3,3}$ with Regina isomorphism signature \texttt{iLLwQPcbeefgehhhhhqhhqhqx}, and given by the gluing table

\begin{center}
\begin{tabular}{r|rrrr}
Tet&$(012)$&$(013)$&$(023)$&$(123)$\\
\hline
$0  $&$3 (012)$&$1 (102)$&$2 (023)$&$1 (123)$\\
$1  $&$0 (103)$&$4 (102)$&$4 (023)$&$0 (123)$\\
$2  $&$6 (012)$&$5 (013)$&$0 (023)$&$4 (301)$\\
$3  $&$0 (012)$&$4 (231)$&$6 (132)$&$5 (032)$\\
$4  $&$1 (103)$&$2 (231)$&$1 (023)$&$3 (301)$\\
$5  $&$7 (103)$&$2 (013)$&$3 (132)$&$7 (123)$\\
$6  $&$2 (012)$&$7 (320)$&$7 (201)$&$3 (032)$\\
$7  $&$6 (230)$&$5 (102)$&$6 (310)$&$5 (123)$
\end{tabular}
\end{center}

This triangulation coincides with census triangulation \texttt{t12546} $\#1$ of a once-cusped hyperbolic $3$-manifold. Its first homology group is $\mathbb{Z} \oplus \mathbb{Z}_2 \oplus \mathbb{Z}_4 $, and its hyperbolic volume is approximately $7.555$. 

The triangulation contains two copies of solid torus $T_3$ (tetrahedra $0$, $1$, $4$, and tetrahedra $5$, $6$, $7$). But unlike in $\tri_{3,3}$, in $\mathcal{U}_{3,3}$ they are not identified along their boundaries. Instead, they attach to a central complex consisting of tetrahedra $2$, and $3$.

We replace the two copies of $T_3$ in $\mathcal{U}_{3,3}$ by $T_k$ and $T_n$, $k,n\geq 3$ odd, to obtain a family of triangulations $\mathcal{U}_{k,n}$ with $n+k+2$ tetrahedra, and three non-trivial $\mathbb{Z}_2$--torsion classes. The three normal surfaces $F_1$, $F_2$, and $F_3$ of each solid torus pair up symmetrically through the central $2$--tetrahedra interface. This produces three normal surfaces, each consisting entirely of quadrilaterals. Because of the symmetric pairing of surfaces from the solid tori, $\mathcal{U}_{k,n}$ is isomorphic to $\mathcal{U}_{n,k}$.
The three quadrilateral normal surfaces have Euler characteristic $-\frac{n+k}{2}$, $-\frac{n+k}{2}$, and $-2$ respectively. Their sum, hence, equals the negative of the number of tetrahedra in $\mathcal{U}_{k,n}$. We conjecture that these surfaces are taut, and hence these triangulations are minimal due to \Cref{thm:sumofnorms}.

As with the main examples of this paper, the family $\mathcal{U}_{k,n}$ arises by Dehn surgery on a thrice-cusped hyperbolic 3--manifold. Indeed, replacing the two solid tori with a cusp each yields a $3$-cusped ideal triangulation with Regina isomorphism signature \texttt{kLLPwLQkceefeijijijiiapuuxptxl}. According to SnapPy's identify function \cite{SnapPy} this triangulation is decomposed into ten regular ideal hyperbolic tetrahedra and gives the thrice-cusped Platonic manifold \texttt{otet10\_00015} (see \cite{Grner2016ACO}). It follows that the triangulation of the Platonic manifold is minimal.

\medskip

It is an interesting open problem to determine further generalisations of this construction, using even more ways of identifying the two solid torus triangulations $T_k$ and $T_n$ along a subcomplex.

%%%%%%%%%%%%%%%%%

\bibliographystyle{plain}
\bibliography{newfamily15}

%%%%%%%%%%%%%%%%%%%%%%%%%%%

\address{J. Hyam Rubinstein\\School of Mathematics and Statistics, The University of Melbourne, VIC 3010, Australia\\{joachim@unimelb.edu.au}\\----- }

\address{Jonathan Spreer\\School of Mathematics and Statistics F07, The University of Sydney, NSW 2006 Australia\\{jonathan.spreer@sydney.edu.au\\-----}}

\address{Stephan Tillmann\\School of Mathematics and Statistics F07, The University of Sydney, NSW 2006 Australia\\{stephan.tillmann@sydney.edu.au}}

\Addresses
                                                      
\end{document}